\documentclass[11pt]{article}
\usepackage{enumerate}
\usepackage{amssymb,amsmath,amsfonts,amsthm,mathrsfs}
\usepackage[colorlinks=true, pdfstartview=FitV, linkcolor=blue, citecolor=blue, urlcolor=blue]{hyperref}
\usepackage{graphicx,color}
\usepackage{cite}
\usepackage{indentfirst}

\textwidth=15cm
\textheight=21cm
\oddsidemargin 0.46cm
\evensidemargin 0.46cm
\allowdisplaybreaks

\usepackage{latexsym}
\usepackage{fancyhdr}
\allowdisplaybreaks \allowdisplaybreaks[4]
 \usepackage{comment}
\pagestyle{myheadings}
\markboth{}{}

\def\rn{{\mathbb R^n}}

\def\nm{{\mathbb (\rn)^m}}

\def\n{{\mathbb N}}

\def\hn{{\mathbb H^n}}

\newtheorem{thm}{Theorem}[section]

\newtheorem{lem}[thm]{Lemma}



\theoremstyle{definition}
\newtheorem{defin}[thm]{Definition}
\newtheorem{rem}[thm]{Remark}



\numberwithin{equation}{section}

\frenchspacing

\textwidth=16.5cm
\textheight=23cm
\parindent=16pt
\oddsidemargin=-0.5cm
\evensidemargin=-0.5cm
\topmargin=-0.5cm

\begin{document}


\baselineskip=17pt

\renewcommand{\thefootnote}{\fnsymbol {footnote}}

\title{\bf\Large Boundedness of the multilinear integral operators on Heisenberg group
\footnotetext{{\it Key words and phrases}: Sharp bound, multilinear integral operator, Heisenberg group, weighted Morrey space.
\newline\indent\hspace{1mm} {\it 2020 Mathematics Subject Classification}: Primary 43A15; Secondary 42b35 26D10.}}
\date{}
\author{Xiang Li, Xi Cen, Zunwei Fu and Zhongci Hang\footnote{Corresponding author}}
\maketitle



\begin{abstract}
In this paper, based on Stein-Weiss Lemma, we study the sharp constants for multilinear integral operators with the nonnegative kernels on Heisenberg group Lebesgue space, weighted Lebesgue space and weighted Morrey space. Using the integral, we can easily calculate the sharp constants for multilinear Hilbert operator, the multilinear Hardy-Littlewood-P$\acute{o}lya$ operator  and the multilinear Hardy operator on Heisenberg group Lebesgue space. In addition, we also consider the boundedness for multilinear Calder$\acute{o}$n-Zygmund operator on $A_p$ weighted Morrey space. 
\end{abstract}

\newtheorem{theorem}{Theorem}[section]
\newtheorem{preliminaries}{Preliminaries}[section]
\newtheorem{definition}{Definition}[section]
\newtheorem{main result}{Main Result}[section]
\newtheorem{lemma}{Lemma}[section]
\newtheorem{proposition}{Proposition}[section]
\newtheorem{corollary}{Corollary}[section]
\newtheorem{remark}{Remark}[section]

\section[Introduction]{Introduction}
For a linear integral operator with the nonnegative kernel which satisfy some homogeneity and the rotational invariance, Beckner \cite{Beckner} obtained the following result which is also called Stein-Weiss lemma.

\noindent{\bf Theorem A.}
Suppose that $K$ is a nonnegative kernel on $\mathbb{R}^n\times\mathbb{R}^n$ that satisfies continuous on any domain excluded the point $(0,0)$, homogeneous of degree $-n$, $K(\delta u,\delta v)=\delta^{-n}K(u,v)$ and $K(Ru,Rv)=K(u,v)$ for any $ R \in \mathrm SO(n)$. Then $T$ is a from $L^p(\mathbb{R}^n)$ to $L^p(\mathbb{R}^n)$ bounded integral operator defined by
$$
 T f(x)=\int_{\mathbb{R}^n} K(x, y) f(y) d y,
$$
with
$$
\|T f\|_{L^p(\mathbb{R}^n)} \leq C\|f\|_{L^p(\mathbb{R}^n)},
$$
holds for $1<p<\infty$, the optimal constant is given by
$$
C=\int_{\mathbb{R}^n} K(x, e_1)|x|^{-\frac{n}{p^{\prime}}} d x,
$$
where $e_1=(1,0,\ldots,0)$ is a unit vector in the first coordinate direction in $\mathbb{R}^n$ and $\mathrm SO(n)$ denotes a set of the rotation transformation on $\mathbb{R}^n$.

In \cite{DW}, Wu and Yan extended Theroem \textbf{A.} to  multilinear setting and gave the sharp constant accordingly.  Now, we recall their main results as follows.

\noindent{\bf Theorem B.}
Suppose that $K$ is a nonnegative kernel defined on $\mathbb{R}^{nm}$ and satisfies  two conditions simultaneously.

(1) $K$ is homogeneous of degree $-nm$, i.e. and
$$
K( {\delta x,\delta {x_1}, \ldots ,\delta {x_m}} ) = {\delta ^{ - nm}}K( {x,{x_1}, \ldots ,{x_m}} )
$$
holds for any $\delta>0$;

(2) For any $R\in\mathrm SO(n)$, it follows that
$$
K(R x, R x_1, \ldots, R x_m)=K(x, x_1, \ldots, x_m).
$$
Then they defined the multilinear integral operator with kernel $K$ such that
$$
{T_K}( {{f_1}, \ldots ,{f_m}} )(x) = \int_{{\mathbb{R}^{n m}}} {K( {x,{x_1}, \ldots ,{x_m}} )\prod\limits_{i = 1}^m {{f_i}} ( {{x_i}} )d{x_1} \cdots d{x_m}},
$$
where $T_K$ maps $L^{p_1}(\mathbb{R}^n)\times\cdots\times L^{p_m}(\mathbb{R}^n)$ to $L^p(\mathbb{R}^n)$ if and only if

$(a)$

$$
\frac{1}{p_1} +\frac{1}{p_2}+\cdots+\frac{1}{p_m}=\frac{1}{p}
$$
and

$(b)$

$$
\int_{\mathbb{R}^{nm}} K(e_1, x_1, \ldots, x_m) \prod_{i=1}^m|x_i|^{-\frac{n}{p_i}} d x_1 \cdots d x_m<\infty,
$$
where  $1 \leq p_i , p \leq \infty$ for $i=1,\ldots,m$, $e_1=(1,0,\ldots,0)$ is a unit vector in the first coordinate direction in $\mathbb{R}^n$ and $\mathrm SO(n)$ denotes a set of the rotation transformation on $\mathbb{R}^n$. The operator $T_k$ is bounded and they have obtained the norm of the operator $T_K$, i.e.,
$$
\|T_K\|_{L^{p_1}(\mathbb{R}^n) \times \cdots \times L^{p_m}(\mathbb{R}^n) \rightarrow L^p(\mathbb{R}^n)}=\int_{\mathbb{R}^{nm}} K(e_1, x_1, \ldots, x_m) \prod_{i=1}^k|x_i|^{-\frac{n}{p_i}} d x_1 \cdots d x_m
$$
holds.

Naturally, it will be a very interesting problem to ask whether we can establish the multilinear Stein-Weiss lemma in Heisenberg group. In this paper, we will give a positive answer. Sharp constants of some function space theory have attracted much attention of analysts for more than a century, which plays  important roles in several branches of mathematics, such as representation theory, harmonic analysis, several complex variables, partial differential equations and quantum mechanics, for more details can see \cite{Stein}. As a special form of linear integral operator, the sharp bound of  Hardy operator on Heisenberg group Lebesgue space has been obtained by Wu and Fu in \cite{Wu}, we will give several special cases of multilinearity.
Another of our main result is  obtain the boundedness of mutilinear Calder$\acute{o}$n-Zygmund operator on Heisenberg group $A_p$ weighted Morrey space.  

Now, we recall  some basic knowledge about Heisenberg group. 

The Heisenberg group $\mathbb{H}^n$ is non-commutative nilpotent Lie group, with the underlying manifold $\mathbb{R}^{2n+1}$ and the group law.

Let
$$
x=(x_1,\ldots,x_{2n},x_{2n+1}),y=(y_1,\ldots,y_{2n},y_{2n+1}),
$$
then
$$
x \circ y=\left(x_1+y_1, \ldots, x_{2 n}+y_{2 n}, x_{2 n+1}+y_{2 n+1}+2 \sum_{j=1}^n(y_j x_{n+j}-x_j y_{n+j})\right).
$$
Note that  Heisenberg group $\mathbb{H}^n$ is a homogeneous group with dilations
$$
\delta_r(x_1, x_2, \ldots, x_{2 n}, x_{2 n+1})=(r x_1, r x_2, \ldots, r x_{2 n}, r^2 x_{2 n+1}), \quad r>0.
$$

The Haar measure on $\mathbb{H}^n$ coincides with the usual Lebesgue measure on $\mathbb{R}^{2n+1}$. We denote any measurable set $E \subset \mathbb{H}^n$ by $|E|$, then
$$
|\delta_r(E)|=r^Q|E|, d(\delta_r x)=r_Q d x,
$$
where $Q=2n+2$ is called the homogeneous dimension of $\mathbb{H}^n$.

The Heisenberg distance derived from the norm
$$
|x|_h=\left[\left(\sum_{i=1}^{2 n} x_i^2\right)^2+x_{2 n+1}^2\right]^{1 / 4},
$$
where $x=(x_1,x_2,\ldots,x_{2n},x_{2n+1})$ is given by
$$
d(p, q)=d(q^{-1} p, 0)=|q^{-1} p|_h.
$$
This distance $d$ is left-invariant in the sense that $d(p,q)$ remains unchanged when $p$ and $q$ are both left-translated by some fixed vector on $\mathbb{H}^n$. Besides, $d$ satisfies the triangular inequality defined by \cite{Kor}
$$
d(p, q) \leq d(p, x)+d(x, q), \quad p, x, q \in \mathbb{H}^n.
$$
For $r>0$ and $x\in\mathbb{H}^n$, the ball and sphere with center $x$ and radius $r$ on $\mathbb{H}^n$ are given by
$$
B(x, r)=\{y \in \mathbb{H}^n: d(x, y)<r\}
$$
and
$$
S(x, r)=\{y \in \mathbb{H}^n: d(x, y)=r\}.
$$
Then we have
$$
|B(x, r)|=|B(0, r)|=\Omega_Q r^Q,
$$
where
$$
\Omega_Q=\frac{2 \pi^{n+\frac{1}{2}} \Gamma(n / 2)}{(n+1) \Gamma(n) \Gamma((n+1) / 2)}
$$
 denote the volume of the unit ball $B(0,1)$ on $\mathbb{H}^n$  that is $\omega_Q=Q \Omega_Q$ (see \cite{CT}). For more details about Heisenberg group can be refer to \cite{GB} and \cite{ST}.

Next, we give the definition of multilinear integral operator with the nonnegative kernel $K$ on Heisenberg group.
\begin{defin}\label{mian_1}
Suppose that $K$ is a nonnegative kernel defined on $\mathbb{H}^{nm}$ and satisfies the homogeneous degree $-nm$,
$$K(|\delta|_h x,|\delta|_h y_1,\ldots,|\delta|_h y_m)=|\delta|^{-nm}_h K(x,y_1,\ldots, y_m)
$$
and$$
K(Rx,Ry_1,\ldots,Ry_m)=K(x,y_1,\ldots,y_m)
$$
for any $R\in SO(n)$,then $K$ denotes the kernel of
\begin{equation}\label{main_2}
H(f_1,\ldots,f_m)(x)=\int_{\mathbb{H}^{nm}}K(x,y_1,\ldots,y_m)\prod^m_{j=1}f_j(y_j)d y_1\cdots d y_m.
\end{equation}
\end{defin}
\section{Sharp constants for multilinear integral operators on Heisenberg group}
In this section, we begin to study multilinear integral operators on Heisenberg group $\mathbb{H}^{n}$. Now we give our main result as follows.

\begin{thm}\label{main_6}
For  operators $H(f_1,\ldots ,f_m)(x)$, which maps $L^p_1(\mathbb{H}^n)\times\cdots\times L^p_m(\mathbb{H}^n)$ to $L^p(\mathbb{H}^n)$. That is,
\begin{equation}
{\left\| H (f_1,\ldots,f_m)\right\|_{{L^{{p_1}}}({\hn}) \times  \cdots  \times {L^{{p_m}}}({\hn}) \to {L^p}({\hn})}} \le {C_m},
\end{equation}
where $$
\frac{1}{p_1}+\frac{1}{p_2}+\cdots+\frac{1}{p_m} =\frac{1}{p}
$$
and $1\leq p_j,p\leq\infty$ for all $j=1,\ldots,m$, the optimal constant is
\begin{equation}
C_m=\int_{\mathbb{H}^{nm}}K(e_1,y_1,\ldots ,y_m)\prod_{i=1}^m |y|_h^{-\frac{Q}{p_i}}d y_1 \cdots d y_m.
\end{equation}
Moreover, if operator $H(f_1,\ldots,f_m)(x)$ is bounded, we can obtain the norm of the operator $H(f_1,\ldots,$
$f_m)(x)$. That is,
\begin{equation}
 {\| H (f_1,\dots,f_m)\|_{{L^{{p_1}}}({\hn}) \times  \cdots  \times {L^{{p_m}}}({\hn}) \to {L^p}({\hn})}} = {C_m}.
\end{equation}
\end{thm}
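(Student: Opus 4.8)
The plan is to establish the two matching estimates $\|H\|_{L^{p_1}\times\cdots\times L^{p_m}\to L^p}\le C_m$ and $\|H\|_{L^{p_1}\times\cdots\times L^{p_m}\to L^p}\ge C_m$ separately, carrying out with respect to the Heisenberg dilations $\delta_r$ and the homogeneous norm $|\cdot|_h$ the dilation-and-rotation normalization that underlies Theorems A and B. Throughout I would use the polar decomposition $x=\delta_s\omega$, $s=|x|_h$, $\omega\in S(0,1)$, under which $dx=s^{Q-1}\,ds\,d\sigma(\omega)$ with $\int_{S(0,1)}d\sigma=\omega_Q$. Since $K\ge 0$, it suffices to treat nonnegative $f_j$, because $|H(f_1,\ldots,f_m)|\le H(|f_1|,\ldots,|f_m|)$.

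For the upper bound, I would fix $x$, substitute $y_j=\delta_{|x|_h}z_j$ (so that $dy_j=|x|_h^{Q}\,dz_j$), and use the homogeneity of $K$ to cancel the resulting Jacobian $|x|_h^{Qm}$; writing $\delta_{1/|x|_h}x\in S(0,1)$ and invoking the rotational invariance $K(R\,\cdot)=K(\cdot)$ to normalize this point to the pole $e_1$ turns the operator into $H(f_1,\ldots,f_m)(x)=\int_{\mathbb{H}^{nm}}K(e_1,z)\prod_j f_j(\delta_{|x|_h}z_j)\,dz$. Taking the $L^p$-norm in $x$, Minkowski's integral inequality moves the norm inside the $z$-integral against $K(e_1,z)\ge 0$, and the generalized Hölder inequality with $\tfrac1p=\sum_j\tfrac1{p_j}$ splits the product. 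A polar-coordinate computation, namely $\int_{\mathbb{H}^n}|f_j(\delta_{|x|_h}z_j)|^{p_j}\,dx=\omega_Q\,|z_j|_h^{-Q}\,\Phi_j(\eta_j)$ with $\eta_j=\delta_{1/|z_j|_h}z_j$, $\Phi_j(\eta_j)=\int_0^\infty|f_j(\delta_\tau\eta_j)|^{p_j}\tau^{Q-1}\,d\tau$, and $\int_{S(0,1)}\Phi_j\,d\sigma=\|f_j\|_{L^{p_j}}^{p_j}$, produces precisely the weights $\prod_j|z_j|_h^{-Q/p_j}$ appearing in $C_m$; a final Hölder (Jensen) estimate on the unit spheres reconciles the angular constants and yields $\|H(f_1,\ldots,f_m)\|_{L^p}\le C_m\prod_j\|f_j\|_{L^{p_j}}$.

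For the reverse inequality (sharpness), I would test $H$ on the homogeneous powers $\phi_j(y)=|y|_h^{-Q/p_j}$, for which the same dilation--rotation normalization gives, formally, $H(\phi_1,\ldots,\phi_m)(x)=C_m\,|x|_h^{-Q/p}$; since $\tfrac1p=\sum_j\tfrac1{p_j}$, this exhibits $\prod_j\phi_j$ as a formal extremizer with eigenvalue $C_m$. Because $\phi_j\notin L^{p_j}(\mathbb{H}^n)$, I would regularize by truncating to spherical shells $\{\,\varepsilon\le|y|_h\le\varepsilon^{-1}\,\}$ (and/or perturbing the exponent by $\pm\varepsilon$), compute the ratio $\|H(\phi_1^{(\varepsilon)},\ldots,\phi_m^{(\varepsilon)})\|_{L^p}/\prod_j\|\phi_j^{(\varepsilon)}\|_{L^{p_j}}$, and let $\varepsilon\to 0^+$. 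The boundary contributions are lower order, so the ratio tends to $C_m$, forcing $\|H\|\ge C_m$; together with the upper bound this gives equality, and the same computation shows that $C_m<\infty$ is both necessary and sufficient for boundedness.

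The main obstacle is the normalization to the pole $e_1$. Unlike in $\mathbb{R}^n$, where $\mathrm{SO}(n)$ acts transitively on the sphere, the Heisenberg dilations are anisotropic (scaling the central variable $x_{2n+1}$ by $r^2$) and the rotation group need not act transitively on the Heisenberg unit sphere $S(0,1)$, so one cannot simply rotate an arbitrary $\delta_{1/|x|_h}x$ to $e_1$. Handling this carefully---verifying that the homogeneity together with the available $\mathrm{SO}(n)$-invariance suffices to reduce the coupled angular integral over $(S(0,1))^m$ to the value $K(e_1,z)$ without introducing a residual Jacobian, and that the accompanying sphere-Hölder estimate closes the constants---is the crux of transferring Theorems A and B to $\mathbb{H}^n$. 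The secondary difficulty is the bookkeeping in the sharpness argument: one must check that the inner and outer shell contributions vanish in the limit and that the hypothesis $C_m<\infty$ guarantees the limiting integral converges to $C_m$ rather than degenerating.
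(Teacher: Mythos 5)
Your plan follows essentially the same route as the paper. (Note that the paper does not prove Theorem \ref{main_6} directly: it proves the weighted Theorem \ref{61} and obtains Theorem \ref{main_6} as the case $\alpha_j=0$.) Both arguments run the Beckner/Wu--Yan scheme: the dilation substitution $y_j=\delta_{|x|_h}z_j$ plus homogeneity of $K$, replacement of the first argument of $K$ by $e_1$, then Minkowski's integral inequality, H\"older's inequality, and the dilation identity $\|f(\delta_{|t|_h}\cdot)\|_{L^{p_j}}=|t|_h^{-Q/p_j}\|f\|_{L^{p_j}}$ (the paper's Lemma \ref{main_10}) for the upper bound; and the power functions $|y|_h^{-Q/p_j}$ for sharpness. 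Two differences in execution: the paper first radializes, replacing each $f_j$ by its spherical average $g_j$ and claiming $H(g_1,\ldots,g_m)=H(f_1,\ldots,f_m)$ with $\|g_j\|\le\|f_j\|$, whereas you attempt the rotation normalization directly; and on sharpness the paper simply substitutes the non-truncated power functions, for which both sides of the claimed norm equality are infinite, whereas your shell truncation $\{\varepsilon\le|y|_h\le\varepsilon^{-1}\}$ with a limiting argument is the rigorous version of that step --- there your proposal is actually more careful than the paper.

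The genuine gap is precisely the step you flag as ``the crux'' and then leave open: normalizing $\delta_{1/|x|_h}x$ to $e_1$. This step cannot be carried out from the stated hypotheses, and no care with angular H\"older/Jensen estimates will close it. The rotations compatible with the Heisenberg structure (those commuting with the dilations $\delta_r$ and preserving $|\cdot|_h$) act only on the horizontal coordinates and fix the central coordinate $x_{2n+1}$; hence on the unit sphere $\{|x|_h=1\}$ the central coordinate is a rotation invariant, the orbits are proper ``latitude'' submanifolds, and a generic unit-sphere point cannot be rotated to $e_1=(1,0,\ldots,0)$. Radializing the $f_j$ does not repair this either: averaging over the rotation group identifies $K(x',\cdot)$ only with its average over the orbit of $x'$, not with $K(e_1,\cdot)$. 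The paper has exactly the same hole --- its Lemma \ref{main_11}, which asserts $K(e_1,y_1,\ldots,y_m)=K(x,y_1,\ldots,y_m)$, is stated without proof and is false for general kernels satisfying only Definition \ref{mian_1}. The theorem is sound only under the stronger hypothesis that $K(x,y_1,\ldots,y_m)$ depends on $x$ through $|x|_h$ alone, i.e.\ that $x'\mapsto K(x',\cdot)$ is constant on the unit sphere; this stronger property does hold for every example treated in Section 3 (the Hilbert, Hardy--Littlewood--P\'olya and Hardy kernels), which is why the applications go through. So your upper-bound step ``invoke rotational invariance to move $\delta_{1/|x|_h}x$ to the pole'' would fail as written, and the honest fix is to add the norm-dependence hypothesis on $K$ (or restrict to such kernels), after which both your argument and the paper's become correct --- with your truncation argument supplying the sharpness proof that the paper only sketches formally.
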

\begin{thm}\label{61}
	For  operator $H(f_1,\ldots ,f_m)(x)$, which maps $L^{p_1}(|x|_h^{\frac{\alpha_1 p_1}{p}} d x) \times \cdots \times L^{p_m}(|x|_h^{\frac{\alpha_m p_m}{p}} d x)$ to $L^p(|x|_h^{\alpha}dx)$. That is,
	\begin{equation}
		\| H (f_1,\ldots,f_m)\|_{L^{p_1}(|x|_h^{\frac{\alpha_1 p_1}{p}} d x) \times \cdots \times L^{p_m}(|x|_h^{\frac{\alpha_m p_m}{p}} d x)\rightarrow L^p(|x|_h^{\alpha}dx)} \leq {D_m},
	\end{equation}
	where
	$$
	\frac{1}{p_1}+\frac{1}{p_2}+\cdots+\frac{1}{p_m} =\frac{1}{p},\quad\alpha_1+\alpha_2+\cdots+\alpha_m=\alpha
	$$
	and $1\leq p_j$, $p\leq\infty$, $\alpha_j<pQ(1-1/p_j)$ for all $j=1,\ldots,m$, then the sharp constant is
	\begin{equation}
		D_m=\int_{\mathbb{H}^{n m}} K(e_1,y_1,\ldots,y_m)\prod_{j=1}^m|y_j|_h^{-\frac{Q}{p_j}-\frac{\alpha_j}{p}}dy_1 \cdots dy_m.
	\end{equation}
	Moreover, if operator $H(f_1,\ldots,f_m)(x)$ is bounded, we can obtain the norm of the operator $H(f_1,\ldots,$
	$f_m)(x)$. That is
	\begin{equation}
		{\| H (f_1,\dots,f_m)\|_{{L^{{p_1}}}({\hn}) \times  \cdots  \times {L^{{p_m}}}({\hn}) \to {L^p}({\hn})}} = {D_m}.
		\end{equation}
\end{thm}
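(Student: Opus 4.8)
The plan is to deduce Theorem~\ref{61} from the unweighted result Theorem~\ref{main_6} by absorbing the power weights into the kernel, so that no new extremizer analysis is required. First I would rewrite both sides of the claimed estimate as \emph{unweighted} norms of rescaled functions. Setting $g_j(y)=|y|_h^{\alpha_j/p}f_j(y)$ gives $\|g_j\|_{L^{p_j}(\hn)}=\|f_j\|_{L^{p_j}(|x|_h^{\alpha_j p_j/p}dx)}$, while $\|H(f_1,\ldots,f_m)\|_{L^p(|x|_h^\alpha dx)}=\big\||x|_h^{\alpha/p}H(f_1,\ldots,f_m)\big\|_{L^p(\hn)}$. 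Substituting $f_j(y_j)=|y_j|_h^{-\alpha_j/p}g_j(y_j)$ into \eqref{main_2} and pulling the factor $|x|_h^{\alpha/p}$ inside the integral yields
\begin{equation}
|x|_h^{\alpha/p}H(f_1,\ldots,f_m)(x)=\int_{\mathbb{H}^{nm}}\widetilde{K}(x,y_1,\ldots,y_m)\prod_{j=1}^m g_j(y_j)\,dy_1\cdots dy_m,
\end{equation}
where $\widetilde{K}(x,y_1,\ldots,y_m)=|x|_h^{\alpha/p}\Big(\prod_{j=1}^m|y_j|_h^{-\alpha_j/p}\Big)K(x,y_1,\ldots,y_m)$. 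Thus the weighted operator norm in Theorem~\ref{61} is exactly the unweighted $L^{p_1}\times\cdots\times L^{p_m}\to L^p$ operator norm of the integral operator with kernel $\widetilde{K}$.

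The second step is to verify that $\widetilde{K}$ meets the hypotheses of Definition~\ref{mian_1}, so that Theorem~\ref{main_6} applies verbatim. Nonnegativity is immediate since $K\ge 0$ and the weight factors are positive. For the homogeneity I would compute, using $|\delta_r x|_h=r|x|_h$ together with the homogeneity of $K$, that the weight factors contribute the extra power $r^{\alpha/p-\sum_j\alpha_j/p}$; the constraint $\alpha_1+\cdots+\alpha_m=\alpha$ forces this to equal $1$, so $\widetilde{K}$ is homogeneous of exactly the same degree as $K$. For the rotational invariance I would use that the admissible rotations preserve the homogeneous norm, $|Rx|_h=|x|_h$, and combine this with the invariance of $K$ to get $\widetilde{K}(Rx,Ry_1,\ldots,Ry_m)=\widetilde{K}(x,y_1,\ldots,y_m)$. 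The scaling relation $\sum_j 1/p_j=1/p$ is precisely hypothesis $(a)$ needed by the unweighted theorem, and it is assumed here as well.

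Applying Theorem~\ref{main_6} to $\widetilde{K}$ then produces the norm $\int_{\mathbb{H}^{nm}}\widetilde{K}(e_1,y_1,\ldots,y_m)\prod_{i=1}^m|y_i|_h^{-Q/p_i}\,dy_1\cdots dy_m$. Since $|e_1|_h=1$, the prefactor $|e_1|_h^{\alpha/p}$ drops out and the surviving weights combine into $|y_j|_h^{-\alpha_j/p-Q/p_j}$, which is precisely $D_m$. Because the maps $f_j\mapsto g_j$ and $H(f)\mapsto|x|_h^{\alpha/p}H(f)$ are isometric bijections of the relevant spaces, this single application transfers both the upper bound and its sharpness from Theorem~\ref{main_6} simultaneously, giving both the estimate and the identity in Theorem~\ref{61}.

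The step I expect to require the most care is the verification that $\widetilde{K}$ truly inherits the kernel hypotheses, in particular that the homogeneity degree is unchanged; this hinges entirely on the cancellation $r^{\alpha/p}\prod_j r^{-\alpha_j/p}=1$, which is exactly what the balance condition $\sum_j\alpha_j=\alpha$ provides, so I would make sure that assumption is used in the right place. The remaining side condition $\alpha_j<pQ(1-1/p_j)$ then plays the expected complementary role: it is precisely the requirement that $|y_j|_h^{-\alpha_j/p-Q/p_j}$ be locally integrable near the origin on $\mathbb{H}^n$ (homogeneous dimension $Q$), guaranteeing that $D_m$ is finite and that the reduction is meaningful rather than vacuous.
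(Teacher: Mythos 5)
Your reduction is mathematically sound and is a genuinely different route from the paper's. Absorbing the weights into the kernel via $\widetilde{K}(x,\vec y)=|x|_h^{\alpha/p}\prod_{j}|y_j|_h^{-\alpha_j/p}K(x,\vec y)$ does turn the weighted operator norm into the unweighted operator norm of $T_{\widetilde K}$; the balance condition $\alpha_1+\cdots+\alpha_m=\alpha$ exactly cancels the extra dilation factor so the homogeneity degree is unchanged, rotation invariance survives because $|Rx|_h=|x|_h$, and since $|e_1|_h=1$ the constant produced by Theorem~\ref{main_6} for $\widetilde K$ is precisely $D_m$; you also correctly identified $\alpha_j<pQ(1-1/p_j)$ as the local integrability condition for $|y_j|_h^{-Q/p_j-\alpha_j/p}$ near the origin. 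The paper, by contrast, proves Theorem~\ref{61} \emph{directly}: it radializes (replacing $f_j$ by its spherical average $g_j$, showing $H(g_1,\ldots,g_m)=H(f_1,\ldots,f_m)$ and $\|g_j\|\le\|f_j\|$ by H\"older), then for radial inputs applies Minkowski's integral inequality, H\"older's inequality, Lemma~\ref{main_11}, and the dilation identity of Lemma~\ref{main_10} to extract the factor $\prod_j|y_j|_h^{-Q/p_j-\alpha_j/p}$, and finally tests on the power functions $f_j=|x|_h^{-(Q/p_j+\alpha_j/p)}$ to get sharpness; Theorem~\ref{main_6} is then declared to be the special case $\alpha_j=0$. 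This last point is the one caveat for your argument: in this paper the unweighted Theorem~\ref{main_6} is \emph{not} proved independently but is obtained from Theorem~\ref{61}, so quoting it to derive Theorem~\ref{61} would be circular as the paper is organized. Your proof is complete only once Theorem~\ref{main_6} is established on its own (e.g.\ by running the paper's radialization argument with all $\alpha_j=0$); granted that, your kernel-transformation argument is cleaner and more economical than the paper's second pass through the same machinery, since it needs no new extremizers and exhibits the weighted statement as formally equivalent to the unweighted one.
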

In order to prove Theorem \ref{main_6} and Theorem \ref{61}, we first need to give the following lemmas.
\begin{lemma}\label{main_10}
Let $1\leq p<\infty$, if $t\in\mathbb{H}^n$ and $f\in L^p(\mathbb{H}^n)$, then we have
\begin{equation}
\|f(\delta_{|t|_h}\cdot)\|_{L^p(\mathbb{H}^n)}=|t|_h^{-\frac{Q}{p}}\|f\|_{L^p(\mathbb{H}^n)}.
\end{equation}
\end{lemma}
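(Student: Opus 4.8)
The plan is to prove this identity by a single change of variables, exploiting the scaling behavior of the Haar measure under the Heisenberg dilations $\delta_r$. The key fact I would invoke is the already-recorded relation $d(\delta_r x) = r^Q\, dx$, where $Q = 2n+2$ is the homogeneous dimension of $\mathbb{H}^n$. Everything reduces to applying this Jacobian correctly.

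First I would write out the $L^p$ norm of the dilated function explicitly:
\begin{equation}
\|f(\delta_{|t|_h}\cdot)\|_{L^p(\mathbb{H}^n)}^p = \int_{\mathbb{H}^n} \bigl|f(\delta_{|t|_h} x)\bigr|^p\, dx.
\end{equation}
Next I would substitute $y = \delta_{|t|_h} x$. Since $|t|_h > 0$ (treating the $t=0$ case separately or noting both sides vanish), this is an invertible dilation, and the measure transforms as $dy = |t|_h^{Q}\, dx$, equivalently $dx = |t|_h^{-Q}\, dy$. Carrying out the substitution gives
\begin{equation}
\int_{\mathbb{H}^n} \bigl|f(\delta_{|t|_h} x)\bigr|^p\, dx = |t|_h^{-Q}\int_{\mathbb{H}^n} |f(y)|^p\, dy = |t|_h^{-Q}\,\|f\|_{L^p(\mathbb{H}^n)}^p.
\end{equation}

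Finally I would take the $p$-th root of both sides, which yields the claimed identity $\|f(\delta_{|t|_h}\cdot)\|_{L^p(\mathbb{H}^n)} = |t|_h^{-Q/p}\|f\|_{L^p(\mathbb{H}^n)}$. There is no real obstacle here: the statement is essentially a restatement of the homogeneity of the Haar measure, and the only point deserving a word of care is confirming that the dilation $\delta_{|t|_h}$ produces exactly the factor $|t|_h^{Q}$ in the Jacobian (rather than a factor depending on the anisotropic scaling of the individual coordinates), which is precisely what the normalization $Q = 2n+2$ encodes. I would also note that the restriction $1 \le p < \infty$ ensures the $p$-th root manipulation is legitimate and the integrals are well defined for $f \in L^p(\mathbb{H}^n)$.
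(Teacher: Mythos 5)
Your proof is correct and is essentially identical to the paper's own argument: both reduce the claim to the change of variables $y=\delta_{|t|_h}x$ together with the Haar-measure scaling $d(\delta_r x)=r^Q\,dx$, followed by taking the $p$-th root. Your added remarks on the degenerate case $t=0$ and on the legitimacy of the root extraction are minor refinements of the same computation, not a different route.
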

\begin{proof}
$$
\begin{aligned}
\|f(\delta_{|t|_h}\cdot)\|_{L^p(\mathbb{H}^n)}&=\left(\int_{\mathbb{H}^n}|f(\delta_{|t|_h} x)|^p dx\right)^\frac{1}{p}\\
&=\left(\int_{\mathbb{H}^n}|f(x)|^p|t|_h^{-Q}dx\right)^\frac{1}{p}\\
&=|t|_h^{-\frac{Q}{p}}\left(\int_{\mathbb{H}^n}|f(x)|^p dx\right)^\frac{1}{p}\\
&=|t|_h^{-\frac{Q}{p}}\|f\|_{L^p(\mathbb{H}^n)}.
\end{aligned}
$$
Lemma \ref{main_10} is thus proved.
\end{proof}
\begin{lemma}\label{main_11}
 Let $K(e_1,y_1,\ldots,y_m)$, where $e_1=(1,0,\ldots,0)$, then we have
$$
K(e_1,y_1,\ldots,y_m)=K(x,y_1,\ldots,y_m).
$$
\end{lemma}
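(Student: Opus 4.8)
The plan is to read the assertion as a normalization statement and to establish it purely from the two structural hypotheses on $K$ recorded in Definition~\ref{mian_1}: homogeneity of degree $-nm$ under the Heisenberg dilations $\delta_r$ and invariance under the rotation group $SO(n)$. The goal is to transport an arbitrary first argument $x \in \hn$ to the distinguished point $e_1 = (1,0,\ldots,0)$, which already has unit Kor\'{a}nyi norm $|e_1|_h = 1$, by composing a dilation that normalizes the norm with a rotation that aligns the direction.

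First I would normalize the length. Setting $r = |x|_h$ and applying the homogeneity identity $K(\delta_r u, \delta_r y_1, \ldots, \delta_r y_m) = r^{-nm} K(u, y_1, \ldots, y_m)$ with $u = \delta_{r^{-1}} x$ reduces the evaluation of $K$ at $x$ to its evaluation at the unit-norm point $\delta_{r^{-1}} x$, at the cost of the explicit factor $r^{-nm}$ together with a simultaneous dilation of $y_1, \ldots, y_m$. Since $e_1$ itself lies on the unit sphere $S(0,1)$, this step isolates all of the genuine geometric content on $S(0,1)$. Next I would invoke the rotation invariance: choosing $R \in SO(n)$ that carries the direction of $\delta_{r^{-1}} x$ to that of $e_1$ and using $K(Rx, Ry_1, \ldots, Ry_m) = K(x, y_1, \ldots, y_m)$ lets the first slot be replaced by $e_1$. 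Concatenating the two steps produces the claimed identity.

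The step I expect to be the main obstacle is the rotation alignment, because it is exactly here that the Heisenberg geometry departs from the Euclidean model underlying Theorems~\textbf{A} and~\textbf{B}. The Kor\'{a}nyi sphere $\{|x|_h = 1\}$ is not a Euclidean sphere, and the horizontal rotations in $SO(n)$ preserve both the modulus of the horizontal component and the central coordinate; hence they do \emph{not} act transitively on $S(0,1)$, and a general unit-norm $x$ need not be rotatable onto $e_1$. I would therefore have to pin down precisely which orbit of $x$ the identity is asserted for, or else argue that the dilation and rotation applied to $y_1, \ldots, y_m$ are harmless because they are reabsorbed by the very change of variables $y_j \mapsto \delta_{r^{-1}} R^{-1} y_j$ (whose Jacobian is a suitable power of $r$) that is used downstream in the proofs of Theorem~\ref{main_6} and Theorem~\ref{61}. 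Making this reduction rigorous — rather than the routine homogeneity rescaling — is where the real care is needed.
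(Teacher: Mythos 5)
You should know at the outset that the paper never actually proves this lemma: it is stated bare and then used silently in the proofs of Theorem \ref{main_6} and Theorem \ref{61}, so there is no proof of the authors' to compare yours against; your dilate-then-rotate plan is exactly the Euclidean Stein--Weiss argument (Theorems A and B) that the authors evidently had in mind. Your diagnosis of where that plan breaks is correct, and it deserves to be stated more strongly: the obstruction is not a point requiring ``real care'', it is fatal. Any action of rotations on $\mathbb{H}^n$ compatible with the group law and the Kor\'anyi norm acts on the horizontal coordinates $(x_1,\dots,x_{2n})$ and preserves $|x_{2n+1}|$ as well as $|x|_h$; consequently the set of points reachable from $e_1$ by dilations and rotations is contained in the hyperplane $\{x\in\mathbb{H}^n : x_{2n+1}=0\}$, a Lebesgue-null set, and no unit-norm point with $x_{2n+1}\neq 0$ can ever be moved to $e_1$.

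Moreover, neither of your two suggested repairs can close the gap, because the statement is false under the hypotheses of Definition \ref{mian_1}. Let $K_0$ be any admissible kernel (for instance the multilinear Hilbert kernel of Section 3) and set $\phi(x)=|x_{2n+1}|^{Q/2}\,|x|_h^{-Q}$. Since $x_{2n+1}$ scales like $r^2$ under $\delta_r$, both $|x_{2n+1}|^{Q/2}$ and $|x|_h^{Q}$ scale like $r^{Q}$, so $\phi$ is dilation-invariant; it is also rotation-invariant, as it depends only on $|x_{2n+1}|$ and $|x|_h$. Hence $K(x,\vec y):=\phi(x)K_0(x,\vec y)$ is nonnegative, homogeneous of the same degree as $K_0$, and rotation-invariant, yet $K(e_1,\vec y)\equiv 0$ while $K(x,\vec y)>0$ whenever $x_{2n+1}\neq 0$. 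Restricting the identity to the orbit of $e_1$ therefore proves it only on a null set, useless for any $L^p$ conclusion; and the ``absorb it downstream'' idea fails too, since after the substitution $y_j\mapsto\delta_{|x|_h}z_j$ the first slot of the kernel sits at the unit-norm point $\delta_{|x|_h^{-1}}x$, where the factor $\phi$ still genuinely varies. The honest conclusion of your analysis is that Lemma \ref{main_11} — and with it the sharpness assertions of Theorems \ref{main_6} and \ref{61} in their stated generality (for the kernel above one gets $C_m=0$ although $H\neq 0$) — requires an additional hypothesis, for example that $K$ depends on its first argument only through $|x|_h$; that hypothesis holds for every kernel treated in Section 3, which is why the paper's applications are unaffected.
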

When $\alpha_i=0$, the sharp constant on Heisenberg group Lebesgue space will be easy to get. So we only prove Theorem \ref{61}.
\begin{proof}[Proof of Theorem \ref{61}]

Set
$$
g_j(x)=\frac{1}{\omega_Q} \int_{|\xi_j|_h=1} f_j(\delta_{|x|_h} \xi_j) \mathrm{d} \xi_j, \quad x \in \mathbb{H}^n,
$$
then $g_j(j=1,\ldots,m)$ are radial functions. By change of variables, we have
$$
\begin{aligned}
H(g_{f_1},\ldots ,g_{f_m})(x)&= \int_{\mathbb{H}^{n m}} K\left(x,y_1, \ldots, y_m\right) g_1\left(y_1\right) \ldots g_m( y_m) d y_1 \cdots d y_m  \\
&=\int_{\mathbb{H}^{n m}} K\left(x,y_1, \ldots, y_m\right) \prod_{j=1}^m\left(\frac{1}{\omega_Q} \int_{|\xi_j|=1} f_j(\delta_{|y|_h} \xi_j) d \xi_j\right) d y_1 \cdots d y_m\\
&=\frac{1}{\omega_Q^m}\int_{\mathbb{H}^{n m}} K(x,y_1,\ldots,y_m ) \prod_{j=1}^m\left(\int_{|\xi_j|_h=|y_j|_h} f_j(z_j ) |y_j|_h^{-Q} d z_j \right) d y_1 \cdots d y_m\\
&=\frac{1}{\omega_Q^m}\int_{\mathbb{H}^{n m}} K(x,y_1,\ldots,y_m)\prod_{j=1}^m\left(\int_{|y_j|_h=|z_j|_h}|y_j|_h^{-Q}d y_j \right)\prod_{j=1}^m f_j(z_j) d z_i\cdots d z_m\\
&=\int_{\mathbb{H}^{n m}} K(x,y_1,\ldots,y_m) \prod_{j=1}^m f_j(z_j)d z_i \cdots d z_m\\
&=H(f_1,\ldots,f_m)(x).
\end{aligned}
$$
Using H\"{o}lder's inequality, we have
$$
\begin{aligned}
\|g_j\|_{L^{p_i}(|x|_h^{\frac{p_i\alpha_i}{p}}dx)} & =\frac{1}{\omega_Q}\left(\int_{\mathbb{H}^n}\left|\int_{|\xi_j|_h=1} f(\delta_{|x|_h} \xi_j)d \xi_j\right|^{p_i}|x|_h^{\frac{p_i\alpha_i}{p}} \mathrm{d} x\right)^{1 / {p_i}} \\
& \leq \frac{1}{\omega_Q}\left\{\int_{\mathbb{H}^n}\left(\int_{|\xi_j|_h=1}\left|f\left(\delta_{|x|_h} \xi_j\right)\right|^{p_i} d \xi_j\right)\left(\int_{|\xi_j|_h=1} d \xi_j\right)^{{p_i} / p^{\prime}_i} |x|_h^{\frac{p_i\alpha_i}{p}}\mathrm{d} x\right\}^{1 / {p_i}} \\
& =\omega_Q^{-1 / {p_i}}\left\{\int_0^{+\infty} \int_{|x^{\prime}|_h=1}\left(\int_{|\xi_j|_h=1}|f(\delta_r \xi_j)|^{p_i}d \xi_j\right) r^{\frac{p_i\alpha_i}{p}+Q-1} d x^{\prime} d r\right\}^{1 / {p_i}} \\
& =\left(\int_{\mathbb{H}^n}|f_j(y)|^{p_i} d y\right)^{1 / {p_i}} \\
& =\|f_j\|_{L^{p_i}(|x|_h^{\frac{p_i\alpha_i}{p}}dx)}.
\end{aligned}
$$
Thus, we have obtained
$$
\frac{\|H(f_1,\ldots,f_m)\|_{L^p(|x|_h^{\alpha}dx)}}{\prod_{j=1}^m\|f_j\|_{L^{p_j}(|x|_h^{\frac{p_i\alpha_i}{p}}dx)}}
\leq\frac{\|H(g_1,\ldots,g_m)\|_{L^p(|x|_h^{\alpha}dx)}}{\prod_{j=1}^m\|g_j\|_{L^{p_j}(|x|_h^{\frac{p_i\alpha_i}{p}}dx)}}.
$$
This implies the operator $H$ and its restriction to radial function have same norm in $L^p(\mathbb{H}^n)$, without loss of generality, we assume that $f_j$ for all $ j=1,\ldots,m$ are radial functions in the rest of the proof.

By Minkowski's inequality, H\"{o}lder's inequality and Lemma \ref{main_11}, we have
$$
\begin{aligned}
\|H(f_1,\ldots,f_m)\|_{L^p(|x|_h^{\alpha}dx)}
&=\left(\int_{\mathbb{H}^n}\left|\int_{\mathbb{H}^{nm}}K(e_1,y_1,\ldots,y_m)\prod^m_{j=1}f_j(\delta_{|x|_h}y_j)dy_1\cdots dy_m\right|^p|x|_h^{\alpha}dx\right)^{1/p}\\
&\leq\int_{\mathbb{H}^{nm}}K(e_1, y_1,\ldots , y_m)\left(\int_{\mathbb{H}^n}\left|\prod_{j=1}^{m}f_j(\delta_{|y_j|_h}x)\right|^p|x|_h^{\alpha}dx\right)^{1/p}dy_1\cdots dy_m\\
&\leq\int_{\mathbb{H}^{nm}}K(e_1, y_1,\ldots , y_m)\prod_{j=1}^{m}\left(\int_{\mathbb{H}^n}|f_j(\delta_{|y_j|_h}x)|^{p_j}|x|_h^{p_j\alpha_j/p}dx\right)^{1/p_j}dy_1\cdots dy_m\\
&=\int_{\mathbb{H}^{nm}}K(e_1, y_1,\ldots , y_m)\prod_{j=1}^{m}\|f(\delta_{|y|_h}\cdot)\|_{L^{p_j}(|x|_h^{p_j\alpha_j/p}dx)}dy_1\cdots dy_m.
\end{aligned}
$$
Using Lemma \ref{main_10}, we have
$$
\begin{aligned}
\|H(f_1,\ldots,f_m)\|_{L^p(|x|_h^{\alpha}dx)}
&\leq\int_{\mathbb{H}^{n m}} K(e_1,y_1,\ldots,y_m)\prod_{j=1}^m|y_j|_h^{-Q/p_j-\alpha_j/p}dy_1 \cdots dy_m\prod_{j=1}^m \|f_j\|_{L^{p_j}(|x|_h^{p_j\alpha_j/p}dx)}\\
&\leq D_m \prod_{j=1}^m\|f_j\|_{L^{p_j}(|x|_h^{p_j\alpha_j/p}dx)}.
\end{aligned}
$$
Next, take
$$
f_j=|x|_h^{-\left(\frac{Q}{p^j}+\frac{\alpha_j}{p}\right)},
$$
through a simple calculation, we can get
$$
H(f_1,\ldots,f_m)=D_m|x|_h^{-\left(\frac{Q}{p}+\alpha\right)}.
$$
Moreover,
$$
\|H(f_1,\ldots,f_m)\|_{L^{p}(|x|_h^\alpha dx)}=D_m\prod_{j=1}^m\|f_j\|_{L^{p^j}(|x|_h^{\frac{p^j\alpha^j}{p}}dx)}.
$$
The proof of Theorem $\ref{61}$ is finished.
\end{proof}
\section{Some special cases}
In this section, we will give some special cases for the multilinear operators on Heisenberg group. The Hilbert operator is the essential extension of the classical Hilbert's inequality , the multilinear Hilbert operator is defined by
$$
B_m(f_1,\ldots, y_m)(x):=\int_{\mathbb{R}^{nm}}\frac{f_1(y_1)\cdots f_m(y_m)}{(|x|^n+|y_1|^n+\cdots+|y_m|^n)^m}d y_1\cdots dy_m, x\in\mathbb{R}^n\backslash\{0\}.
$$
For $n=1$, the following is a know sharp estimate
$$
\int_0^{\infty} B_1^* f(x) g(x) d x \leq \frac{\pi}{\sin (\pi / p)}\|f\|_{L^p(0, \infty)}\|g\|_{L^{p^{\prime}(0, \infty)}}.
$$
Now, we give the definition of the multilinear Hilbert operator on Heisenberg group. 
\begin{defin}
	Suppose that each $f_i$ is a measurable function on $\mathbb{H}^n$ for $i=1,\ldots,m$. The mulitilinear Hilbert operator $B$ on Heisenberg group is defined by
	\begin{equation}\label{hilbert}
		B_m^h(f_1,\ldots,f_m)(x):=\int_{\mathbb{H}^n}\cdots\int_{\mathbb{H}^n}\frac{\prod_{i=1}^m f_i(x_i)}{(|x|_h^Q+|y_1|_h^Q+\cdots+|y_m|_h^Q)^m}dy_1\cdots dy_m.
	\end{equation} 
\end{defin}
\begin{thm}
	If $1<p_1,\ldots,p_m\leq\infty$ satisfy
	$$
	0<\frac{1}{p_1}+\cdots+\frac{1}{p_m}=\frac{1}{p}\leq1,
	$$
	then $B_m^h$ is bounded from $L^{p_1}(\mathbb{H}^n)\times\cdots\times L^{p_m}(\mathbb{H}^n)$ to $L^p(\mathbb{H}^n)$ and 
	$$
	\|B_m^h\|_{L^{p_1}(\mathbb{H}^n)\times \cdots\times L^p(\mathbb{H}^n)\rightarrow L^p(\mathbb{H}^n)}=\left(\frac{2 \pi^{n+\frac{1}{2}} \Gamma(n / 2)}{(n+1) \Gamma(n) \Gamma((n+1)/2)}\right)^m\frac{\prod^m_{i=1}\Gamma(1-\frac{1}{p_i})\Gamma(\frac{1}{p})}{\Gamma(m)}.
	$$
\end{thm}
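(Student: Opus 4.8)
The plan is to realize $B_m^h$ as a special case of the operator $H(f_1,\ldots,f_m)$ in Theorem \ref{main_6} and then to evaluate the resulting sharp constant by an explicit integral computation. First I would take the kernel
$$
K(x,y_1,\ldots,y_m)=\frac{1}{\left(|x|_h^Q+|y_1|_h^Q+\cdots+|y_m|_h^Q\right)^m}
$$
and verify the two structural hypotheses of Definition \ref{mian_1}. Under the dilation $\delta_r$ one has $|\cdot|_h\mapsto r|\cdot|_h$, so every term $|\cdot|_h^Q$ scales by $r^Q$ and $K$ scales by $r^{-Qm}$, which is the required homogeneity; rotational invariance is immediate since $K$ depends on its arguments only through the norms $|\cdot|_h$. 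Theorem \ref{main_6} then applies (with all weights trivial) and identifies the operator norm with
$$
C_m=\int_{\mathbb{H}^{nm}}\frac{\prod_{i=1}^m|y_i|_h^{-Q/p_i}}{\left(1+|y_1|_h^Q+\cdots+|y_m|_h^Q\right)^m}\,dy_1\cdots dy_m,
$$
where I have used $|e_1|_h=1$.

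The heart of the argument is the evaluation of $C_m$. Since the integrand is radial in each $y_i$, I would pass to polar coordinates on $\mathbb{H}^n$, using $\int_{\mathbb{H}^n}g(|y_i|_h)\,dy_i=\omega_Q\int_0^\infty g(r_i)r_i^{Q-1}\,dr_i$ with $\omega_Q=Q\Omega_Q$, which recasts $C_m$ as
$$
C_m=\omega_Q^m\int_0^\infty\!\!\cdots\!\int_0^\infty\frac{\prod_{i=1}^m r_i^{Q-1-Q/p_i}}{\left(1+r_1^Q+\cdots+r_m^Q\right)^m}\,dr_1\cdots dr_m.
$$
The substitution $s_i=r_i^Q$ (so that $r_i^{Q-1}\,dr_i=Q^{-1}\,ds_i$ and $r_i^{-Q/p_i}=s_i^{-1/p_i}$) converts $\omega_Q^m$ into $(Q\Omega_Q)^m Q^{-m}=\Omega_Q^m$ and reduces everything to the homogeneous beta-type integral
$$
I=\int_{[0,\infty)^m}\frac{\prod_{i=1}^m s_i^{-1/p_i}}{\left(1+s_1+\cdots+s_m\right)^m}\,ds_1\cdots ds_m.
$$

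Finally I would compute $I$ via the gamma representation $\left(1+\sum s_i\right)^{-m}=\Gamma(m)^{-1}\int_0^\infty t^{m-1}e^{-t(1+\sum s_i)}\,dt$. Interchanging the order of integration (legitimate by Tonelli, the integrand being nonnegative) factors the $s$-variables, and each one-dimensional integral evaluates as $\int_0^\infty s_i^{-1/p_i}e^{-ts_i}\,ds_i=\Gamma(1-1/p_i)\,t^{-(1-1/p_i)}$; the leftover $t$-integral is $\int_0^\infty t^{\,m-1-\sum_i(1-1/p_i)}e^{-t}\,dt=\Gamma(1/p)$, where I used $\sum_i(1-1/p_i)=m-1/p$. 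Hence $I=\Gamma(m)^{-1}\prod_{i=1}^m\Gamma(1-1/p_i)\,\Gamma(1/p)$, and $C_m=\Omega_Q^m I$ is precisely the claimed norm, the factor $\Omega_Q^m$ reproducing the displayed constant $\left(\tfrac{2\pi^{n+1/2}\Gamma(n/2)}{(n+1)\Gamma(n)\Gamma((n+1)/2)}\right)^m$. The only real obstacle is the bookkeeping of the convergence conditions, all of which are furnished by the hypotheses: $p_i>1$ makes each $s_i$-integral converge at the origin (equivalently $\Gamma(1-1/p_i)<\infty$), while $1/p>0$ guarantees $m-\sum_i(1-1/p_i)>0$ so that the final $t$-integral, and hence $I$, is finite.
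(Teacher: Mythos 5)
Your proposal is correct, and it follows the paper's strategy up to the decisive computation: like the paper, you realize the Hilbert kernel as an instance of the sharp-constant theorem (Theorem \ref{main_6}), pass to polar coordinates with the factor $\omega_Q^m$, and substitute $s_i=r_i^Q$ to reduce the norm to $\Omega_Q^m\int_{[0,\infty)^m}\prod_i s_i^{-1/p_i}\left(1+\sum_i s_i\right)^{-m}ds$. Where you diverge is in evaluating this beta-type integral: the paper proceeds by induction on $m$, peeling off one variable at a time via the substitution $t_m=(1+t_1+\cdots+t_{m-1})q_m$ and the one-dimensional identity $\int_0^\infty (1+t)^{-\alpha}t^{-\beta}dt=B(1-\beta,\alpha+\beta-1)$, whereas you use the Schwinger/Laplace representation $\left(1+\sum_i s_i\right)^{-m}=\Gamma(m)^{-1}\int_0^\infty t^{m-1}e^{-t(1+\sum_i s_i)}dt$ and Tonelli to factor all $m$ integrations at once. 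Your route is a genuine improvement in two respects: it is a one-shot computation with no induction (and it silently repairs the paper's garbled induction formula, where $\Gamma\bigl(\alpha-k+\prod_i\beta_i\bigr)$ should read $\Gamma\bigl(\alpha-k+\sum_i\beta_i\bigr)$), and it makes the convergence bookkeeping transparent, since finiteness is exactly equivalent to $p_i>1$ (each factor $\Gamma(1-1/p_i)$) and $1/p>0$ (the final $t$-integral). What the paper's recursive method buys instead is the more general closed form for $I_m(\alpha,\beta_1,\ldots,\beta_m)$ with arbitrary exponent $\alpha$, which is reusable beyond the case $\alpha=m$ needed here; your argument also generalizes to arbitrary $\alpha>\sum_i(1-1/p_i)$ with no extra work, so nothing essential is lost.
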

\begin{proof}
	The kernel of the operator in (\ref{hilbert}) satisfies the conditions of Theorem \ref{main_6}. Using the polar coordinates and making change of variables, we have
	$$
	\begin{aligned}
		\|B_m^h\|_{\prod_{i=1}^m L^{p_i}(\mathbb{H}^n)\rightarrow L^p(\mathbb{H}^n)}&=\int_{\mathbb{H}^{nm}}\frac{|y_1|_h^{-Q/p_1}\cdots|y_m|_h^{-Q/p_m}}{(1+|y_1|_h^Q+\cdots+|y_m|_h^Q)^m} dy_1\cdots dy_m\\
		&=\omega_Q^m\int_0^\infty\cdots\int_0^\infty\frac{r_1^{Q-Q/p_1-1}\cdots r_m^{Q-Q/p_m-1}}{(1+r_1^Q+\cdots+r_m^Q)^m}dr_1\cdots dr_m\\
		&=\frac{\omega_Q^m}{Q^m}\int_{0}^{\infty}\cdots\int_{0}^{\infty}\frac{t_1^{-1/p_1}\cdots t_m^{-1/p_m}}{(1+t_1+t_2+\cdots+t_m)^m}dt_1\cdots dt_m,
	\end{aligned}
	$$
	Set
	$$
	I_m(\alpha,\beta_1,\ldots,\beta_m):=\int_0^\infty\cdots \int_0^\infty\frac{t_1^{-\beta_1}\cdots t_m^{-\beta_m}}{(1+t_1+\cdots+t_m)^\alpha}dt_1\cdots dt_m.
	$$
	By a simple calculate
	$$
	\int_0^\infty\frac{1}{(1+t)^\alpha t^\beta}dt=\int_0^\infty(1-t)^{-\beta}t^{\alpha+\beta-2}dt.
	$$
	By the definition of Beta function $B$, we have 
	$$
	\int_0^\infty\frac{1}{(1+t)^\alpha t^\beta}dt=B(1-\beta,\alpha+\beta-1).
	$$
	Using the variable substitution $t_m=(1+t_1+\cdots+t_m-1)q_m$, we have
	$$
	\begin{aligned}
		&I_m(\alpha,\beta_1,\ldots,\beta_m)\\
		&=\int_0^\infty\cdots\int_0^\infty\frac{t_1^{-\beta_1}\cdots t_{m-1}^{-\beta_{m-1}}}{(1+t_1+\cdots+t_{m-1})^{\alpha-1+\beta_m}}dt_1\cdots dt_{m-1}\int_0^\infty\frac{1}{(1+q_m)^\alpha q_m^{\beta_m}}d q_m\\
		&=B(1-\beta_m,\alpha+\beta_m-1)I_{m-1}(\alpha-1+\beta_m,\beta_1,\ldots,\beta_{m-1}).
	\end{aligned}
	$$
	Using the inductive method together with the properties of Gamma function $\Gamma$, we have
	$$
	I_m(\alpha,\beta_1,\ldots,\beta_m)=\frac{\prod_{i=1}^k \Gamma(1-\beta_i) \Gamma(\alpha-k+\prod_{i=1}^k \beta_i)}{\Gamma(\alpha)}.
	$$
	Thus, we can obtain
	$$
	\begin{aligned}
		\|B_m^h\|_{\prod_{i=1}^m H^\infty_{\alpha_i}(\mathbb{H}^n)\rightarrow H^\infty_\alpha(\mathbb{H}^n)}&=\frac{\omega_Q^m}{Q^m}\frac{\prod^m_{i=1}\Gamma(1-\frac{1}{p_i})\Gamma(\frac{1}{p})}{\Gamma(m)}\\
		&=\left(\frac{2 \pi^{n+\frac{1}{2}} \Gamma(n / 2)}{(n+1) \Gamma(n) \Gamma((n+1)/2)}\right)^m\frac{\prod^m_{i=1}\Gamma(1-\frac{1}{p_i})\Gamma(\frac{1}{p})}{\Gamma(m)}.
	\end{aligned}
	$$
	Thus, we finished the proof.
\end{proof}
The mutilinear Hardy-Littlewood-P\'{o}lya operator is defined by
$$
P_m(f_1,\ldots , f_m)(x):=\int_{\mathbb{R}^{nm}}\frac{f_1(y_1)\cdots f_m(y_m)}{[\text{max}(|x|^n,|y_1|^n,\ldots, |y_m|^n)]^m}dy_1\cdots dy_m, x\in\mathbb{R}^n\backslash\{0\}.
$$
When $m=1$, the linear Hardy-Littlewood-P\'{o}lya operator $P_1$ is considered in \cite{Hardy}, Hardy et al. obtained the norm of Hardy-Littlewood-P\'{o}lya operator on $L^p(\mathbb{R}^+)(1<q<\infty)$, that is
$$
\|P_1\|_{L^p(\mathbb{R}^+\rightarrow L^p(\mathbb{R}^+))}=\frac{p^2}{p-1}.
$$
We give the definition of the multilinear Hardy-Littlewood-P\'{o}lya operator on Heisenberg group.
\begin{defin}
	Suppose that $f_1,\ldots,f_m$ be  nonnegative locally integrable functions on $\mathbb{H}^n$. The $m$-linear $n$-dimensional Hardy-Littlewood-P\'{o}lya operator is defined by
	\begin{equation}\label{Hardyl}
		P^h_m(f_1,\ldots,f_m)(x)=\int_{\mathbb{H}^{nm}}\frac{f_1(y_1)\cdots f_m(y_m)}{[\text{max} (|x|_h^Q,|y_1|_h^Q,\ldots,|y_m|_h^Q)]^m}dy_1\cdots dy_m,  x\in\mathbb{H}^n\backslash\{0\}.
	\end{equation}
\end{defin} 
\begin{thm}
	If $1<p_1,\ldots,p_m\leq\infty$ satisfy
	$$
	0<\frac{1}{p_1}+\cdots+\frac{1}{p_m}=\frac{1}{p}\leq1,
	$$
	then $P_m^h$ is bounded from $L^{p_1}(\mathbb{H}^n)\times\cdots\times L^{p_m}(\mathbb{H}^n)$ to $L^p(\mathbb{H}^n)$ and 
	$$
	\|P_m^h\|_{L^{p_1}(\mathbb{H}^n)\times \cdots\times L^p(\mathbb{H}^n)\rightarrow L^p(\mathbb{H}^n)}=\frac{m\Omega_Q^mp}{\prod_{j=1}^m(1-1/p_j)}.
	$$
\end{thm}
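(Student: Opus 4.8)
The plan is to recognize $P_m^h$ as the operator $H(f_1,\ldots,f_m)$ of Definition \ref{mian_1} associated to the kernel
$$K(x,y_1,\ldots,y_m)=\frac{1}{[\max(|x|_h^Q,|y_1|_h^Q,\ldots,|y_m|_h^Q)]^m},$$
and then to apply Theorem \ref{main_6}. First I would verify the two structural hypotheses: under a dilation $\delta_\lambda$ one has $|\delta_\lambda x|_h=\lambda|x|_h$, so the maximum in the denominator is multiplied by $\lambda^Q$ and $K$ carries the homogeneity required in Definition \ref{mian_1}; rotational invariance is immediate because $K$ depends on its arguments only through their Heisenberg norms. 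Since moreover $\tfrac1{p_1}+\cdots+\tfrac1{p_m}=\tfrac1p$ by hypothesis, Theorem \ref{main_6} is applicable, and using $|e_1|_h=1$ it identifies the operator norm with
$$\|P_m^h\|=\int_{\mathbb{H}^{nm}}\frac{\prod_{i=1}^m|y_i|_h^{-Q/p_i}}{[\max(1,|y_1|_h^Q,\ldots,|y_m|_h^Q)]^m}\,dy_1\cdots dy_m.$$

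Next I would reduce this to a one-dimensional computation. Passing to polar coordinates on each factor, $\int_{\mathbb{H}^n}g(|y|_h)\,dy=\omega_Q\int_0^\infty g(r)r^{Q-1}\,dr$, turns the norm into
$$\omega_Q^m\int_0^\infty\cdots\int_0^\infty\frac{\prod_{i=1}^m r_i^{Q-Q/p_i-1}}{[\max(1,r_1^Q,\ldots,r_m^Q)]^m}\,dr_1\cdots dr_m,$$
and the substitution $t_i=r_i^Q$ (so that $r_i^{Q-Q/p_i-1}\,dr_i=\tfrac1Q t_i^{-1/p_i}\,dt_i$) clears the exponents to give
$$\frac{\omega_Q^m}{Q^m}\int_0^\infty\cdots\int_0^\infty\frac{\prod_{i=1}^m t_i^{-1/p_i}}{[\max(1,t_1,\ldots,t_m)]^m}\,dt_1\cdots dt_m.$$
Because $\omega_Q=Q\Omega_Q$, the prefactor equals $\Omega_Q^m$, so everything reduces to evaluating $J:=\int_{(0,\infty)^m}\prod_{i=1}^m t_i^{-1/p_i}\,[\max(1,t_1,\ldots,t_m)]^{-m}\,dt_1\cdots dt_m$.

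The evaluation of $J$ is the heart of the argument, and the main obstacle is the $\max$ in the denominator. My plan is to split $(0,\infty)^m$, up to a null set, into the region $R_0=\{t_i\le1\text{ for all }i\}$, on which $\max=1$, and the regions $R_k=\{t_k\ge1\text{ and }t_k\ge t_i\text{ for all }i\}$ for $k=1,\ldots,m$, on which $\max=t_k$. On $R_0$ the integral factorizes as $\prod_{i=1}^m\int_0^1 t_i^{-1/p_i}\,dt_i=\prod_{i=1}^m(1-1/p_i)^{-1}$, the integrals converging precisely because each $p_i>1$. On $R_k$ I would first integrate the variables $t_i$ with $i\neq k$ over $[0,t_k]$, each producing $t_k^{1-1/p_i}/(1-1/p_i)$; using $\sum_{i\neq k}(1-1/p_i)=(m-1)-(1/p-1/p_k)$, the accumulated power of $t_k$ collapses the outer integrand to $t_k^{-1-1/p}$, whence $\int_{R_k}=\big(\int_1^\infty t_k^{-1-1/p}\,dt_k\big)\big/\prod_{i\neq k}(1-1/p_i)=p\big/\prod_{i\neq k}(1-1/p_i)$.

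Finally I would assemble the pieces. Writing $a_i=1-1/p_i$, the contributions sum to $J=\big(\prod_i a_i\big)^{-1}+p\sum_{k=1}^m\big(\prod_{i\neq k}a_i\big)^{-1}=\big(\prod_i a_i\big)^{-1}\big(1+p\sum_{k=1}^m a_k\big)$, and since $\sum_{k=1}^m a_k=m-\sum_{k=1}^m 1/p_k=m-1/p$ this telescopes to $J=mp\big/\prod_{i=1}^m(1-1/p_i)$. Multiplying by the prefactor $\Omega_Q^m$ gives the claimed identity $\|P_m^h\|=m\Omega_Q^m p\big/\prod_{j=1}^m(1-1/p_j)$; in particular $J<\infty$, so Theorem \ref{main_6} simultaneously yields the boundedness of $P_m^h$. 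The only delicate points are the convergence of the inner integrals (secured by $p_i>1$) and the careful bookkeeping of the exponents of $t_k$ on each region $R_k$.
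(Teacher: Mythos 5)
Your proposal is correct and follows essentially the same route as the paper: invoke Theorem \ref{main_6} to reduce the operator norm to the integral of the kernel evaluated at $e_1$ against $\prod_i|y_i|_h^{-Q/p_i}$, then split the domain of integration according to which variable realizes the maximum and whether it exceeds $1$, with the same telescoping sum $1+p\sum_k(1-1/p_k)=mp$ at the end. The only difference is presentational: you first pass to polar coordinates and substitute $t_i=r_i^Q$ so that all $m$ are treated uniformly in a one-dimensional computation, whereas the paper works directly on $\mathbb{H}^{nm}$ and separates the cases $m=2$ and $m\geq 3$.
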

\begin{proof}
	The kernel of the operator in (\ref{Hardyl}) satisfies the conditions of Theorem \ref{main_6}. In order to get the sharp constant of mutilinear Hardy-Littlewood-P\'{o}lya operator, we will consider the following two scenarios and combine them to obtain our result.
	
	\textbf{1.case when $m=2$.}
	In this case , we have
	$$
	P^h_2=\int_{\mathbb{H}^n}\int_{\mathbb{H}^n}\frac{|y_1|_h^{-Q/p_1}|y_2|_h^{-Q/p_2}}{[\text{max}(1,|y_1|_h^Q,|y_2|_h^Q)]^2}dy_1dy_2.
	$$
	By calculation,
	$$
	\begin{aligned}
		\int_{\mathbb{H}^n}\int_{\mathbb{H}^n}&\frac{|y_1|_h^{-Q/p_1}|y_2|_h^{-Q/p_2}}{[\text{max}(1,|y_1|_h^Q,|y_2|_h^Q)]^2}dy_1dy_2\\
		&=\int_{|y_1|_h<1}\int_{|y_2|_h<1}|y_1|_h^{-Q/p_1}|y_2|_h^{-Q/p_2}dy_1dy_2\\
		&+\int_{|y_1|_h>1}\int_{|y_2|_h\leq|y_1|_h}|y_1|_h^{-Q/p_1-2Q}|y_2|_h^{-Q/p_2}dy_1 dy_2\\
		&+\int_{|y_2|_h>1}\int_{|y_1|_h<|y_2|_h}|y_1|_h^{-Q/p_1}|y_2|_h^{-Q/p_2-2Q}dy_1dy_2\\
		&:=I_0+I_1+I_2.
	\end{aligned}
	$$
	$$
	\begin{aligned}
		I_0&=\int_{|y_1|_h<1}\int_{|y_2|_h<1}|y_1|_h^{-Q/p_1}|y_2|_h^{-Q/p_2}dy_1dy_2\\
		&=\frac{\Omega_Q^2}{(1-1/p_1)(1-1/p_2)}\\
		I_1&=\int_{|y_1|_h>1}\int_{|y_2|_h\leq|y_1|_h}|y_1|_h^{-Q/p_1-2Q}|y_2|_h^{-Q/p_2}dy_1 dy_2\\
		&=\frac{\Omega_Q}{1-1/p_2}\int_{|y_1|_h>1}|y_1|_h^{-Q/p-Q}dy_1\\
		&=\frac{\Omega_Q^2p}{(1-1/p_2)}.
	\end{aligned}
	$$
	Similarly,
	$$
	\begin{aligned}
		I_2&=\int_{|y_2|_h>1}\int_{|y_1|_h<|y_2|_h}|y_1|_h^{-Q/p_1}|y_2|_h^{-Q/p_2-2Q}dy_1dy_2\\
		&=\frac{\Omega_Q}{1-1/p_1}\int_{|y_2|_h>1}|y_2|_h^{-Q/p-Q}dy_2\\
		&=\frac{\Omega_Q^2p}{(1-1/p_1)}.
	\end{aligned}
	$$
	Thus,
	$$
	\begin{aligned}
		&\int_{\mathbb{H}^n}\int_{\mathbb{H}^n}\frac{|y_1|_h^{-Q/p_1}|y_2|_h^{-Q/p_2}}{[\text{max}(1,|y_1|_h^Q,|y_2|_h^Q)]^2}dy_1dy_2\\
		&=I_0+I_1+I_2\\
		&=\frac{2p\Omega_Q^2}{(1-1/p_1)(1-p_2)}.
	\end{aligned}\\
	$$
	\textbf{2.case when $m\geq3$.}
	
	Let
	$$
	\begin{aligned}
		&E_0=\{(y_1,\ldots,y_m)\in\mathbb{H}^n\times\cdots\times\mathbb{H}^n:|y_k|_h\leq1,1\leq k\leq m\};\\
		&E_1=\{(y_1,\ldots,y_m)\in\mathbb{H}^n\times\cdots\times\mathbb{H}^n:|y_1|_h>1,|y_k|_h\leq|y_1|_h,2\leq k\leq m\};\\
		&E_i=\{(y_1, \ldots, y_m) \in \mathbb{H}^n \times \cdots \times \mathbb{H}^n:|y_i|_h>1,|y_j|_h<|y_k|_h,|y_k|_h \leq|y_i|_h, 1 \leq j<i<k \leq m\};\\
		&E_m=\{(y_1,\ldots,y_m)\in\mathbb{H}^n\times\cdots\times\mathbb{H}^n:|y_m|_h>1,|y_j|_h<|y_m|_h,1<j<m\};
	\end{aligned}
	$$
	obviously, we have
	$$
	\bigcup_{j=0}^m E_j=\mathbb{H}^n \times \cdots \times \mathbb{H}^n,E_i \cap E_j=\emptyset.
	$$
	Taking
	$$
	K_j=\int_{\mathbb{H}^{nm}}\frac{\prod_{i=1}^m |y_i|_h^{-Q/p_i}}{[ \text{max}  (1,|y_1|_h^Q,\ldots,|y_m|_h^Q)]^m}dy_1\cdots dy_m ,
	$$
	then we begin to calculate $K_j$ with $j=0,1,\ldots,m$.
	$$
	K_0=\prod_{j=1}^m\int_{|y_j|\leq 1}|y_j|_h^{-Q/p_j}d y_j=\frac{\Omega_Q^m}{\prod_{j=1}^m(1-1/p_j)},
	$$
	$$
	\begin{aligned}
		K_1&=\int_{|y_1|>1}|y_1|_h^{-Q/p_1-Qm}dy_1\prod_{j=2}^m\int_{|y_j|_h\leq|y_1|_h}|y_j|_h^{-Q/p_j}dy_j\\
		&=\frac{\Omega_Q^{m-1}}{\prod_{j=2}^m(1-1/p_j)}\int_{|y_1|_h>1}|y_1|_h^{-Q/p-Q}dy_1\\
		&=\frac{\Omega_Q^mp}{\prod_{j=2}^m(1-1/p_j)}.
	\end{aligned}
	$$
	So we can deduce that
	$$
	K_j=\frac{\Omega_Q^mp}{ \prod_{1\leq i\leq m,i\neq j} (1-1/p_j)}.
	$$
	Then we can obtain that
	$$
	K_m=\frac{m\Omega_Q^mp}{\prod_{j=1}^m(1-1/p_j)}.
	$$
	Combining the above two cases, we finished the proof. 
\end{proof}
Next, define the mutilinear Hardy operator on Heisenberg group as follows,
\begin{equation}\label{Hardy1}
D_m^h(f_1,\ldots,f_m)(x):=\prod_{i=1}^m\frac{1}{|B(0,|x|_h)|}\int_{B(o,|x|_h)}|f_i(x_i)|dx_i,
\end{equation}
where $x, y_1,\ldots,y_m\in\mathbb{H}^n$. We conclude from (\ref{Hardy1}) that
$$
D_m^h(f_1,\ldots,f_m)(x)\prod_{i=1}^{m}Df_i(x),
$$
where
\begin{equation}\label{Hardy2}
	D^hf_i(x):=\frac{1}{|B(0,|x|_h)|}\int_{B(o,|x|_h)}|f_i(x_i)|dx_i.
\end{equation}
The operator $D^h$ in (\ref{Hardy2}) is defined by Wu and Fu in \cite{Wu}.

Now we define a kernel function $K$ as
\begin{equation}\label{Hardy3}
K(x, y_1,\ldots, y_m)=\frac{1}{(\Omega_Q|x|_h^Q)^m} \prod_{i=1}^m \chi_{|y_i|_h \leqslant|x|_h}(y_i, x).
\end{equation}
Clearly, the operator $D_m^h$ is equivalent to
$$
D_m^h(f_1, \ldots, f_m)(x)=\int_{\mathbb{H}^{n m}} K(x, y_1,\ldots, y_m) \prod_{i=1}^m f_i(x_i) d x_1 \cdots d x_m.
$$
Next, we formulate our main theorem.
\begin{thm}
	Suppose that $f_i \in L^{p_i}(\mathbb{H}^n)$ for $1<p_i \leqslant \infty$ with $i=1, \ldots, m$. If $1<p \leqslant \infty$ and
	$$
	\frac{1}{p_1}+\frac{1}{p_2}+\cdots+\frac{1}{p_m}=\frac{1}{p},
	$$
	then we have
	$$
	\|D_m^h(f_1, \ldots, f_m)\|_{L^p(\mathbb{H}^n)} \leqslant\left(\prod_{i=1}^m \frac{p_i}{p_i-1}\right) \prod_{i=1}^m\|f\|_{L^{p_i}(\mathbb{H}^n)}
	$$
	and $\prod_{i=1}^m \frac{p_i}{p_i-1}$ is the sharp bound.
	\end{thm}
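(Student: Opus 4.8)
The plan is to recognize this statement as a direct instance of Theorem \ref{main_6} applied to the explicit kernel $K$ given in (\ref{Hardy3}), so that the whole argument reduces to checking the hypotheses of that theorem and then evaluating the resulting constant $C_m$ in closed form. First I would verify that
$$
K(x,y_1,\ldots,y_m)=\frac{1}{(\Omega_Q|x|_h^Q)^m}\prod_{i=1}^m\chi_{|y_i|_h\leq|x|_h}
$$
satisfies the two structural conditions of Definition \ref{mian_1}. Since the Heisenberg norm is homogeneous of degree one under the dilations $\delta_r$, we have $|\delta_r x|_h=r|x|_h$, so each factor $|x|_h^{-Q}$ scales by $r^{-Q}$ while the characteristic functions $\chi_{|y_i|_h\leq|x|_h}$ are dilation invariant; hence $K$ is homogeneous of degree $-Qm$. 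Rotational invariance under $R\in SO(n)$ follows because both the norm $|x|_h$ and the inequalities $|y_i|_h\leq|x|_h$ are preserved by such rotations.

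Having verified the hypotheses, Theorem \ref{main_6} gives that the operator norm equals
$$
C_m=\int_{\mathbb{H}^{nm}}K(e_1,y_1,\ldots,y_m)\prod_{i=1}^m|y_i|_h^{-\frac{Q}{p_i}}\,dy_1\cdots dy_m.
$$
I would then substitute $e_1=(1,0,\ldots,0)$, for which $|e_1|_h=1$, so that $K(e_1,y_1,\ldots,y_m)=\Omega_Q^{-m}\prod_{i=1}^m\chi_{|y_i|_h\leq1}$. The integrand factorizes completely across the variables $y_1,\ldots,y_m$, reducing $C_m$ to a product of $m$ single-variable integrals,
$$
C_m=\frac{1}{\Omega_Q^m}\prod_{i=1}^m\int_{|y_i|_h\leq1}|y_i|_h^{-\frac{Q}{p_i}}\,dy_i.
$$

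Each factor is then evaluated by passing to polar coordinates on $\mathbb{H}^n$, using $|B(0,r)|=\Omega_Q r^Q$ and $\omega_Q=Q\Omega_Q$. This turns the $i$-th integral into $\omega_Q\int_0^1 r^{Q-Q/p_i-1}\,dr$, which converges precisely because the hypothesis $p_i>1$ forces the exponent $Q-Q/p_i-1$ to exceed $-1$; its value is $\omega_Q/(Q(1-1/p_i))=\Omega_Q\,p_i/(p_i-1)$. Multiplying the $m$ factors and cancelling the prefactor $\Omega_Q^{-m}$ yields $C_m=\prod_{i=1}^m p_i/(p_i-1)$, which is both the stated upper bound and, by the sharpness half of Theorem \ref{main_6}, the exact operator norm.

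I do not expect a genuine obstacle here: the content of the theorem is essentially the verification that the Hardy kernel meets the homogeneity and rotation hypotheses and the elementary evaluation of a radial integral. The only point demanding care is the convergence condition at the origin, where the role of the assumption $p_i>1$ (equivalently $\alpha_i=0<pQ(1-1/p_i)$ in the notation of Theorem \ref{61}) must be made explicit; without it the defining integral for $C_m$ would diverge and the operator would fail to be bounded.
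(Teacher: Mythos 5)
Your proposal is correct and follows essentially the same route as the paper: invoke Theorem \ref{main_6} for the kernel in (\ref{Hardy3}), substitute $e_1$ (with $|e_1|_h=1$), factor the integral, and evaluate each factor in polar coordinates to obtain $\prod_{i=1}^m p_i/(p_i-1)$. In fact you are slightly more careful than the paper, since you explicitly verify the homogeneity and rotation-invariance hypotheses and note that $p_i>1$ is exactly what makes each radial integral converge.
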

	\begin{proof}
	The kernel in (\ref{Hardy3}) satisfies the Theorem \ref{main_6}. According to Theorem \ref{main_6}, we conclude that
	$$
	\begin{aligned}
		\|D_m^h\|_{L^{p_1}(\mathbb{H}^n) \times \cdots \times L^{p_m}(\mathbb{H}^n) \rightarrow L^p(\mathbb{H}^n)} & =\int_{\mathbb{H}^{n m}} K\left(e_1, y_1,\ldots, y_m\right) \prod_{i=1}^m|y_i|_h^{-\frac{Q}{p_i}} d y_1 \cdots d y_m \\
		& =\Omega_Q^{-m} \prod_{i=1}^m \int_{|y_i|_h \leqslant 1}|y_i|^{-\frac{Q}{p_i}} d y_i \\
		& =\Omega_Q^{-m} \prod_{i=1}^m \int_0^1 r_i^{Q-1} \int_{S^{Q-1}} r_i^{-\frac{Q}{p_i}} d \sigma\left(y_i^{\prime}\right) d r_i \\
		& =\left(\frac{\omega_{Q}}{\Omega_Q}\right)^m \prod_{i=1}^k \int_0^1 r_i^{Q-1-\frac{Q}{p_i}} d \sigma\left(y_i^{\prime}\right) d r_i \\
		& =\prod_{i=1}^m \frac{p_i}{p_i-1}
	\end{aligned}
	$$
	This completes the proof.
	\end{proof}
	We remark that we can use the H$\ddot{o}$lder's inequality to prove the $L^{p_1}(\mathbb{H}^n) \times \cdots \times L^{p_m}(\mathbb{H}^n) \rightarrow L^p(\mathbb{H}^n)$-boundedness of $D_m^h$, i.e.,
	$$
	\|D_m^h(f_1, \ldots, f_m)\|_{L^p}(\mathbb{H}^n) \leqslant \prod_{i=1}^m\|D^h f_i\|_{L^{p_i}(\mathbb{H}^n)} \leqslant\left(\prod_{i=1}^m \frac{p_i}{p_i-1}\right) \prod_{i=1}^m\|f_i\|_{L^{p i}(\mathbb{H}^n)}
	$$
	holds. Obviously $\prod_{i=1}^m \frac{p_i}{p_i-1}$ is a bound of the operator $D_m^h$. However, it is not clear that $\prod_{i=1}^m \frac{p_i}{p_i-1}$ must be the sharp bound.
\section{Sharp constants for multilinear integral operator on Morrey space with two power weight in Heisenberg group}
The  Morrey space with power  weight as a special case of Lebesgue space and $A_p$ weighted Morrey space have some very interesting properties. Inspired by \cite{He}, we begin to consider sharp constant for multilinear integral operator on Morrey space with power  weight in Heisenberg group. 

For any measurable function $\omega^*$ over a set $E$ is given by
$$
\omega^*(E)=\int_E w^* d x.
$$
In what follows, $B(|x|_h,R)$ denotes the ball centered at $|x|_h$ with radius $R$, $|B(|x|_h , R)|$ denotes the Lebesgue measure of $B(|x|_h, R)$.

We will use this notation in the following definition of two power weighted Morrey spaces.
\begin{defin}
	Let $\omega^*_1,\omega^*_2:\mathbb{H}^n\rightarrow(0,\infty)$ are positive  measurable function, $1 \leq q<\infty $ and $-1 / q \leq \lambda<0$. The two power weighted Morrey space $L^{q,\lambda}(\mathbb{H}^n,\omega^*_1,\omega^*_2)$ is defined by
	$$
	L^{q, \lambda}(\mathbb{H}^n, \omega^*_1, \omega^*_2)=\{f:\|f\|_{L^{q, \lambda}(\mathbb{H}^n, \omega^*_1, \omega^*_2)}<\infty\},
	$$
	where
	$$
	\|f\|_{L^{q, \lambda}\left(\mathbb{H}^n, w^*_1, w^*_2\right)}=\sup _{a \in \mathbb{H}^n, R>0} w_1(B(|a|_h, R))^{-(\lambda+1 / q)}\left(\int_{B(|a|_h, R)}|f(x)|^q w_2(x) d x\right)^{1 / q} .
	$$
\end{defin}
\begin{thm}\label{main_100}
	Let $f_i$ be radial functions in $L^{q_j,\lambda}(\mathbb{H}^n,|x|_h^\alpha,|x|^\frac{q_j\gamma_j}{q})$, $1\leq q<\infty$, $-\frac{1}{q}\leq\lambda<0$, $1<q_j<\infty$, $\frac{1}{q}=\frac{1}{q_1}+\cdots+\frac{1}{q_m}$, $\gamma=\gamma_1\cdots+\gamma_m$, $-\frac{1}{q_j} \leq \lambda_j<0$ with $j=1, \ldots, m$.
	Then we have
	\begin{equation}
		\|H(f_1,\ldots,f_m)\|_{L^{q,\lambda}(\mathbb{H}^n,|x|_h^\alpha,|x|_h^\gamma)}\leq A_m\prod_{j=1}^m\|f_j\|_{L^{q_j, \lambda}_j(\mathbb{H}^n,|x|_h^\alpha,|x|_h^{\frac{q_j \gamma_j}{q}})},
	\end{equation}
	where
	\begin{equation}
		A_m=\int_{\mathbb{H}^{nm}} K(e_1,y_1,\ldots,y_m)\prod_{j=1}^{m}|y_j|^{Q\lambda_j-\frac{1}{q_j}\frac{q_j\gamma_j}{q}+\alpha(\lambda_j+\frac{1}{q_j})} d y_1 \cdots d y_m.
	\end{equation}
	Moreover, if $\alpha\neq -Q,-\frac{1}{q_j}<\lambda_j<0$ and $q\lambda=q_j\lambda_j$ with $j=1,\ldots,m$, then we have
	\begin{equation}
		\|H(f_1,\ldots,f_m)\|_{\prod_{j=1}^m L^{q_j,\lambda_j}(\mathbb{H}^n,|x|_h^\alpha,|x|_h^{\frac{q_j\gamma_j}{q}})\rightarrow L^{q,\lambda}(\mathbb{H}^n,|x|_h^\alpha,|x|_h^\gamma)}= A_m.
	\end{equation}
\end{thm}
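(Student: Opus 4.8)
The plan is to follow the template established in the proof of Theorem \ref{61}, replacing the weighted Lebesgue norms by the two power weighted Morrey norms and carefully tracking the extra normalizing weight $w_1 = |x|_h^\alpha$. Since the $f_j$ are assumed radial, I would first rewrite the operator using the homogeneity of degree $-nm$ together with the rotational invariance of $K$ (via Lemma \ref{main_11}): after the change of variables $y_j \mapsto \delta_{|x|_h} y_j$ one obtains
$$
H(f_1,\ldots,f_m)(x) = \int_{\mathbb{H}^{nm}} K(e_1, y_1,\ldots,y_m) \prod_{j=1}^m f_j(\delta_{|y_j|_h} x)\, dy_1\cdots dy_m,
$$
where radiality permits interchanging $f_j(\delta_{|x|_h} y_j)$ with $f_j(\delta_{|y_j|_h} x)$.

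For the boundedness half, I would fix a ball $B = B(|a|_h, R)$, apply Minkowski's integral inequality to pull the kernel integral outside the weighted $L^q(B, |x|_h^\gamma dx)$ norm, and then apply H\"older's inequality with exponents $q_j/q$ to split the product, using $\sum_j 1/q_j = 1/q$ and $\gamma = \sum_j \gamma_j$ so that the target weight factors as $\prod_j |x|_h^{\gamma_j}$ and each piece becomes a local $L^{q_j}(B, |x|_h^{q_j\gamma_j/q} dx)$ integral of $f_j(\delta_{|y_j|_h} x)$. The crucial step is the change of variables $z = \delta_{|y_j|_h} x$: it sends $B$ to the dilated ball $\delta_{|y_j|_h} B$, produces the Jacobian and norm factors $|y_j|_h^{-Q - q_j\gamma_j/q}$, and—by the elementary scaling identity $w_1(\delta_r B) = r^{\alpha+Q} w_1(B)$—converts the Morrey normalization $w_1(\delta_{|y_j|_h} B)^{\lambda_j + 1/q_j}$ into $|y_j|_h^{(\alpha+Q)(\lambda_j+1/q_j)} w_1(B)^{\lambda_j + 1/q_j}$. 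Collecting the powers of $|y_j|_h$ yields exactly the exponent $Q\lambda_j - \frac{1}{q_j}\frac{q_j\gamma_j}{q} + \alpha(\lambda_j + 1/q_j)$ appearing in $A_m$, while the factors $w_1(B)^{\lambda_j+1/q_j}$ multiply to $w_1(B)^{\lambda+1/q}$ (using the parameter relations $\sum_j 1/q_j = 1/q$ and $\sum_j \lambda_j = \lambda$), which cancels the $w_1(B)^{-(\lambda+1/q)}$ in the definition of the left-hand Morrey norm. Taking the supremum over $(a,R)$ then gives the claimed inequality with constant $A_m$.

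For the sharpness assertion I would test the inequality on power functions $f_j(x) = |x|_h^{-\beta_j}$, with $\beta_j$ dictated by the scaling forced by $q\lambda = q_j\lambda_j$; a direct computation shows $H(f_1,\ldots,f_m)$ is again a power of $|x|_h$ times precisely $A_m$, and the conditions $\alpha \neq -Q$ and $-1/q_j < \lambda_j < 0$ guarantee that the relevant power-weighted integrals over balls converge and that the supremum defining each Morrey norm is attained. Matching the Morrey norms of the test functions on both sides then forces equality, establishing that $A_m$ is sharp.

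The main obstacle is the sharpness half. Unlike the Lebesgue setting of Theorem \ref{61}, the Morrey norm is a supremum over all balls $B(|a|_h,R)$, so one must verify that the power test functions actually \emph{realize} this supremum rather than merely bounding it, and do so with consistent homogeneity across all $m$ factors. This is exactly where the hypotheses $q\lambda = q_j\lambda_j$ and $\alpha \neq -Q$ enter: the former aligns the scaling exponents so that a single family of extremizers is admissible simultaneously for every factor, and the latter ensures the weight integral $w_1(B)$ behaves like a genuine single power of the radius; additional care is needed with the geometry of balls under the anisotropic Heisenberg dilation $\delta_r$ when reducing the supremum to balls centered at the origin.
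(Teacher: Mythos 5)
Your proposal follows essentially the same route as the paper: the boundedness half is the paper's Minkowski--H\"older argument, with your per-ball change of variables and the scaling identity $w_1(\delta_r B)=r^{\alpha+Q}w_1(B)$ being exactly the content of the paper's Lemma \ref{main_101} on dilation of the two-power-weighted Morrey norm, and the sharpness half uses the same power-function test $f_j(x)=|x|_h^{Q\lambda_j-\frac{\gamma_j}{q}+\alpha(\lambda_j+\frac{1}{q_j})}$. If anything, you are more explicit than the paper about the implicitly required relation $\sum_{j}\lambda_j=\lambda$ and about why $q\lambda=q_j\lambda_j$ makes one family of balls extremal for all factors simultaneously.
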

To facilitate the proof of Theorem \ref{main_100}, we need to give an important result that is similar in form of Lemma \ref{main_10}.
\begin{lemma}\label{main_101}
	Let $1\leq q<\infty$, $-\frac{1}{q}\leq\lambda<0$ and $\alpha,\gamma\in\mathbb{R}$. If $t\in\mathbb{H}^n$, $f\in L^{q,\lambda}(\mathbb{H}^n,|x|_h^\alpha,|x|_h^\gamma)$ , then we have
	\begin{equation}
		\|f(\delta_{|t|_h} \cdot)\|_{L^{q,\lambda}(\mathbb{H}^n,|x|_h^\alpha,|x|_h^\gamma)}=|t|_h^{Q\lambda-\frac{\gamma}{q}+\alpha(\lambda+\frac{1}{q})}\|f\|_{L^{q,\lambda}(\mathbb{H}^n,|x|_h^\alpha,|x|_h^\gamma)}.
	\end{equation}
	\begin{proof}
		$$
		\begin{aligned}
			& \|f(\delta_{|t|_h} \cdot)\|_{L^{q, \lambda}(\mathbb{H}^n,|x|_h^\alpha,|x|_h^{\gamma})} \\
			=& \sup _{a \in \mathbb{H}^n, R>0}\left(\int_{B(|a|_h, R)}|x|_h^\alpha d x\right)^{-(\lambda+\frac{1}{q})}\left(\int_{B(|a|_h, R)}|f(\delta_|t|_h x)|^q|x|_h^\gamma d x\right)^{\frac{1}{q}} \\
			=&  \sup _{a \in \mathbb{H}^n, R>0}\left(\int_{B(|a|_h, R)}|x|_h^\alpha d x\right)^{-(\lambda+\frac{1}{q})}\left(\int_{B( |a|_h, R)}|f(\delta _{|t|_h}x)|^q||t|_h x|^\gamma |t|_h^{-\gamma}d x\right)^{\frac{1}{q}} \\
			=&  |t|_h^{-\frac{Q}{q}-\frac{\gamma}{q}}\sup _{a \in \mathbb{H}^n, R>0}\left(\int_{B(|a|_h, R)}|x|_h^\alpha d x\right)^{-(\lambda+\frac{1}{q})}\left(\int_{B(|t\alpha|_h, |t|_h R)}|f(x)|^q|x|_h^\gamma d x\right)^{\frac{1}{q}} \\
			=&  |t|_h^{-\frac{Q}{q}-\frac{\gamma}{q}}\sup _{a \in \mathbb{H}^n, R>0}\left(\int_{B(|a|_h, R)}||t|_hx|_h^\alpha|t|_h^{-\alpha} d x\right)^{-(\lambda+\frac{1}{q})}\left(\int_{B(|t\alpha|_h, |t|_h R)}|f(x)|^q|x|_h^\gamma d x\right)^{\frac{1}{q}} \\
			=&  |t|_h^{Q\lambda-\frac{\gamma}{q}+\alpha(\lambda+\frac{1}{q})}\sup _{a \in \mathbb{H}^n, R>0}\left(\int_{B(|ta|_h, |t|_h R)}|x|_h^\alpha d x\right)^{-(\lambda+\frac{1}{q})}\left(\int_{B(|t\alpha|_h, |t|_h R)}|f(x)|^q|x|_h^\gamma d x\right)^{\frac{1}{q}}\\
			=& |t|_h^{Q\lambda-\frac{\gamma}{q}+\alpha(\lambda+\frac{1}{q})}\|f\|_{L^{q,\lambda}(\mathbb{H}^n,|x|_h^\alpha,|x|_h^\gamma)}.
		\end{aligned}
		$$
		This finishs the proof of Lemma \ref{main_101}.
	\end{proof}
\end{lemma}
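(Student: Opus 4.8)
The plan is to reduce the identity to three scaling facts about the Heisenberg structure and then track how each power of $|t|_h$ emerges from a single change of variables. Write the Morrey norm of $g:=f(\delta_{|t|_h}\cdot)$ as the supremum over $a\in\mathbb{H}^n$, $R>0$ of the product $\big(\int_{B(|a|_h,R)}|x|_h^\alpha\,dx\big)^{-(\lambda+1/q)}\big(\int_{B(|a|_h,R)}|g(x)|^q|x|_h^\gamma\,dx\big)^{1/q}$, and aim to pull a pure power of $|t|_h$ out of each of the two factors, leaving the supremum of the untranslated quantity intact.

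First I would record the homogeneity ingredients. From the explicit formula $|x|_h=[(\sum_{i=1}^{2n}x_i^2)^2+x_{2n+1}^2]^{1/4}$ together with the anisotropic dilation $\delta_r$, both terms under the fourth root scale by $r^4$, so $|\delta_r x|_h=r|x|_h$; the Haar measure obeys $d(\delta_r x)=r^Q\,dx$ as recorded in the preliminaries; and since $\delta_r$ is a group automorphism, $d(\delta_r p,\delta_r q)=|(\delta_r q)^{-1}(\delta_r p)|_h=r\,d(p,q)$, whence $\delta_r$ carries the ball $B(a,R)$ bijectively onto $B(\delta_r a,rR)$.

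Next I would substitute $y=\delta_{|t|_h}x$ in both integrals. Using $|x|_h^\gamma=|t|_h^{-\gamma}|\delta_{|t|_h}x|_h^\gamma$ and $dx=|t|_h^{-Q}\,dy$, the numerator integral becomes $|t|_h^{-\gamma-Q}\int_{B(\delta_{|t|_h}a,|t|_h R)}|f(y)|^q|y|_h^\gamma\,dy$; the identical manipulation with $\gamma$ replaced by $\alpha$ gives $\int_{B(|a|_h,R)}|x|_h^\alpha\,dx=|t|_h^{-\alpha-Q}\int_{B(\delta_{|t|_h}a,|t|_h R)}|y|_h^\alpha\,dy$. Raising the first to the power $1/q$ and the second to $-(\lambda+1/q)$ and multiplying produces the scalar factor $|t|_h^{(\alpha+Q)(\lambda+1/q)-\gamma/q-Q/q}$, which simplifies to $|t|_h^{Q\lambda-\gamma/q+\alpha(\lambda+1/q)}$, exactly the claimed exponent.

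Finally, I would take the supremum. Because $(a,R)\mapsto(\delta_{|t|_h}a,|t|_h R)$ is a bijection of $\mathbb{H}^n\times(0,\infty)$ onto itself, the supremum of the two transformed integrals over $(a,R)$ equals $\|f\|_{L^{q,\lambda}(\mathbb{H}^n,|x|_h^\alpha,|x|_h^\gamma)}$, while the extracted power of $|t|_h$ factors out of the supremum since it is independent of $(a,R)$. The argument is essentially exponent bookkeeping; the only point requiring genuine care is the homogeneity of the left-invariant distance and the resulting bijection of the ball-parameter set, which is precisely what guarantees the supremum is reproduced exactly rather than merely bounded.
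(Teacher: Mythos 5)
Your proposal is correct and follows essentially the same route as the paper's proof: a change of variables $y=\delta_{|t|_h}x$ in both the weight integral and the $f$-integral, extraction of the powers $|t|_h^{-\alpha-Q}$ and $|t|_h^{-\gamma-Q}$, and reparametrization of the supremum via $(a,R)\mapsto(\delta_{|t|_h}a,|t|_h R)$. If anything, you are more careful than the paper on the one genuinely substantive point—verifying that $\delta_r$ is a group automorphism with $d(\delta_r p,\delta_r q)=r\,d(p,q)$, so that it carries $B(a,R)$ exactly onto $B(\delta_r a,rR)$ and the supremum is reproduced rather than merely bounded—whereas the paper's write-up leaves this implicit (and contains typographical slips such as $B(|t\alpha|_h,|t|_h R)$).
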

Next, we will give the proof of Theorem \ref{main_100}.
\begin{proof}[Proof of Theorem $\ref{main_100}$]
	Set
	$$
	g_j(x)=\frac{1}{\omega_Q} \int_{|\xi_j|_h=1} f_j(\delta_{|x|_h} \xi_j) d\xi_j, \quad x \in \mathbb{H}^n,
	$$
	we have obtained that
	$$
	H(g_{f_1},\ldots,g_{f_m})(x)=H (f_1,\ldots,f_m)(x).
	$$
	Using Minkowski's inequality and  H\"{o}lder's inequality, for $j=1,\ldots,m$, we have
	$$
	\begin{aligned}
		&\|g_j\|_{L^{q_j,\lambda_j}(\mathbb{H}^n,|x|_h^\alpha,|x|_h^{\frac{q_j,\gamma_j}{q}})}\\
		=&\frac{1}{\omega_Q}\sup_{a\in\mathbb{H}^n,R>0}\left(\int_{B(|a|_h,R)}|x|_h^\alpha d x\right)^{-(\lambda+\frac{1}{q})}\left(\int_{B(|a|_h,R)}\left|\int_{|\xi_j|_h=1}f_j(\delta_{|x|_h}\xi_j)d \xi_j\right|^q|x|_h^{\frac{q_j\gamma_j}{q}}dx\right)^{\frac{1}{q}}\\
		\leq&\frac{1}{\omega_Q}\sup_{a\in\mathbb{H}^n,R>0}\left(\int_{B(|a|_h,R)}|x|_h^\alpha d x\right)^{-(\lambda+\frac{1}{q})}\int_{|\xi_j|_h=1}\left(\int_{B(|a|_h,R)}|f_j(\delta_{|x|_h}\xi_j)| ^q |x|_h^\frac{q_j\gamma_j}{q} d x \right)^{\frac{1}{q}} d \xi_j\\
		\leq&\left(\int_{B(|a|_h,R)}|x|_h^\alpha dx\right)^{-(\lambda+\frac{1}{q})}\left(\frac{1}{\omega_Q}\int_{|\xi_j|_h=1}\int_{B(|a|_h,R)}|f_j(\delta_{|x|_h}\xi_j)|^q |x|^{\frac{q_j\gamma_j}{q}}dx d \xi_j\right)^{\frac{1}{q}}\\
		=&\|f_j\|_{L^{q_j,\lambda_j}(\mathbb{H}^n,|x|_h^\alpha,|x|_h^{\frac{q_j\gamma_j}{q}})}.
	\end{aligned}
	$$
	Thus, we have
	$$
	\frac{\|H(f_1,\ldots,f_m)\|_{L^{q,\lambda}(\mathbb{H}^n,|x|_h^\alpha,|x|_h^\gamma)}}{\prod_{j=1}^m\|f_j\|_{L^{q_j,\lambda_j}(\mathbb{H}^n,|x|_h^\alpha,|x|_h^{\frac{q_j\gamma_j}{q}})}}
	\leq\frac{\|H(g_1,\ldots,g_m)\|_{L^{q,\lambda}(\mathbb{H}^n,|x|_h^\alpha,|x|_h^\gamma)}}{\prod_{j=1}^m\|g_j\|_{L^{q_j,\lambda_j}(\mathbb{H}^n|x|_h^\alpha,|x|_h^{\frac{q_j\gamma_j}{q}})}}.
	$$
	Then by Minkowski's inequality, H\"{o}lder's inequality, Lemma  \ref{main_11} and Lemma \ref{main_101}, we can easily get
	$$
	\begin{aligned}
		&\|H(f_1,\ldots,f_m)\|_{L^{q,\lambda}(\mathbb{H}^n,|x|_h^\alpha,|x|_h^\gamma)}\\
		\leq&\int_{\mathbb{H}^{nm}} K(e_1,y_1,\ldots,y_m)\prod_{j=1}^{m}|y_j|^{Q\lambda_j-\frac{1}{q_j}\frac{q_j\gamma_j}{q}+\alpha(\lambda_j+\frac{1}{q_j})} d y_1 \cdots d y_m\prod_{j=1}^m\|f_j\|_{L^{q_j, \lambda}(\mathbb{H}^n,|x|_h^\alpha,|x|_h^{\frac{q_j \gamma_j}{q}})}\\
		=&A_m\prod_{j=1}^m\|f_j\|_{L^{q_j, \lambda}(\mathbb{H}^n,|x|_h^\alpha,|x|_h^{\frac{q_j \gamma_j}{q}})}.
	\end{aligned}
	$$
	Taking
	$$
	f_j(x)=|x|_h^{Q\lambda_j-\frac{1}{q_j}\frac{q_j\gamma_j}{q}+\alpha(\lambda_j+\frac{1}{q_j})}, j=1,\ldots,m,
	$$
	we have
	$$
	\|H(f_1,\ldots,f_m)\|_{L^{q,\lambda}(\mathbb{H}^n,|x|_h^\alpha,|x|_h^\gamma)}=A_m\prod_{j=1}^m\|f_j\|_{L^{q_j, \lambda}(\mathbb{H}^n,|x|_h^\alpha,|x|_h^{\frac{q_j \gamma_j}{q}})}.
	$$
	The proof of Theorem \ref{main_100} is finished.
\end{proof}
\section{The Boundedness for multilinear Calder$\acute{o}$n-Zygmund operators on Heisenberg group weighted Morrey spaces}
In this section, we give the boundedness for multilinear integral operator on Heisenberg group weighted Morrey spaces.
The classical $A_p$ weight theory was introduced by Muckenhoupt in the study of weighted
Lebesgue boundedness of Hardy-Littlewood maximal functions, one can see Chapter 7 in \cite{Gra1}.

\begin{defin}
	A weight $\omega$ is a nonnegative locally integrable function on $\mathbb H^n$. We denote the ball
of radius $r$ centered $x_0$ by $B=B(x_0,r)$, we say that $\omega\in A_p$ for some $1<p<\infty$, if
	$$\left(\frac1{|B|}\int_B \omega(x)\,dx\right)\left(\frac1{|B|}\int_B \omega(x)^{-\frac{1}{p-1}}\,dx\right)^{p-1}\le C \quad\mbox{for every ball}\; B\subseteq \mathbb
	H^n,$$ where $C$ is a positive constant which is independent of $B$.\\
	We say $\omega\in A_1$, if
	$$\frac1{|B|}\int_B \omega(x)\,dx\le C\,\underset{x\in B}{\mbox{ess\,inf}}\,\omega(x)\quad\mbox{for every ball}\;B\subseteq\mathbb H^n,$$
	we denote $${A_\infty } = \bigcup\limits_{1 \le p < \infty } {{A_p}}.$$
\end{defin}
\begin{defin}
	A weight function $\omega$ is said to belong to the reverse H\"{o}lder class $RH_r$ if there exist two constants $r>1$ and $C>0$ such that the following reverse H\"{o}lder inequality holds
	$$\left(\frac{1}{|B|}\int_B \omega(x)^r\,dx\right)^{1/r}\le C\left(\frac{1}{|B|}\int_B \omega(x)\,dx\right)\quad\mbox{for every ball}\; B\subseteq \mathbb H^n.$$
	It is well known that if $\omega\in A_p$ with $1<p<\infty$, then $\omega\in A_r$ for all $r>p$ and $\omega\in A_q$ for any $1<q<p$. If $\omega\in A_p$ with $1\le p<\infty$, then there exists $r>1$ such that $\omega\in RH_r$.
\end{defin}

Now let us recall the definitions of multiple weights.
\begin{defin}
	For $m$ exponents $p_1,\ldots,p_m$, we denote $\vec{P}$ by the vector $\vec{P}=(p_1,\ldots,p_m)$. Let $p_1,\ldots,p_m\in[1,\infty)$ and $p\in(0,\infty)$ with $1/p=\sum_{k=1}^m 1/{p_k}$. Given $\vec{\omega}=(\omega_1,\ldots,\omega_m)$, set $\nu_{\vec{\omega}}=\prod_{i=1}^m \omega_i^{p/{p_i}}$. We say that $\vec{\omega}$ satisfies the $A_{\vec{P}}$ condition if it satisfies
	\begin{equation}
	\sup_B\left(\frac{1}{|B|}\int_B \nu_{\vec{\omega}}(x)\,dx\right)^{1/p}\prod_{i=1}^m\left(\frac{1}{|B|}\int_B \omega_i(x)^{1-p'_i}\,dx\right)^{1/{p'_i}}<\infty,
	\end{equation}
	where $p_i=1,$ $\left(\frac{1}{|B|}\int_B \omega_i(x)^{1-p'_i}\,dx\right)^{1/{p'_i}}$ is understood as ${(\mathop {\inf }\limits_{x \in B} {\omega _i}(x))^{ - 1}}$.
\end{defin}

The classical Morrey spaces $L^{p,\lambda}$ were first introduced by Morrey  \cite{Morrey} to study the local behavior of solutions to second order elliptic partial differential equations. In 2009, Komori and Shirai \cite{Komori} considered the weighted  Morrey spaces $L^{p,\kappa}(\omega)$ and studied the boundedness of some classical operators such as the Hardy-Littlewood maximal operator and the Calder\'on-Zygmund operator on these spaces.

\begin{defin}\label{main_3}
	Let $0<p<\infty$, $0<\kappa<1$ and $\omega$ be a weight function on $\mathbb H^n$, then the weighted Morrey space is defined by
	\begin{equation*}
	L^{p,\kappa}(\omega)=\big\{f:	\big\|f\big\|_{L^{p,\kappa}(\omega)}= \mathop {\sup }\limits_{B\subseteq\mathbb{H}^n} \omega {(B)^{ - \frac{\kappa }{p}}}{\| f \|_{{L^p}(B,\omega dx)}}<\infty\big\}.
	\end{equation*}
\end{defin}

\begin{defin}
	Let $0<p<\infty$, $0<\kappa<1$ and $\omega$ be a weight function on $\mathbb H^n$, then  weighted weak Morrey space is defined by
	\begin{equation}
W L^{p, \kappa}(\omega)=\left\{f:	\big\|f\big\|_{WL^{p,\kappa}(\omega)}= \mathop {\sup }\limits_{B\subseteq\mathbb{H}^n} \omega {(B)^{ - \frac{\kappa }{p}}}{\left\| f \right\|_{W{L^p}(B,\omega dx)}}<\infty\right\}.
\end{equation}
\end{defin}

We now recall the definitions of multilinear Calderón-Zygmund operators with Dini kernel.
\begin{defin}
	For any $t \in (0,\infty ),$ let ${K}(x,{y_1}, \cdots ,{y_m})$ be a locally integrable function defined away from the diagonal $x = {y_1} =  \cdots  = {y_m}$ in $(\mathbb H^n)^{m+1}$. We say $K$ is a kernel of type $\theta$ if for some constants $A>0,$ such that 
	\begin{enumerate}
		\item[\emph{(1)}]
		$\left| {K(x,\vec y)} \right| \le \frac{A}{{{{(\sum\limits_{j = 1}^m {{{\left| {{{({y_j})}^{ - 1}}x} \right|}_h}} )}^{mn}}}};$
		\item[\emph{(2)}]
		$\left| {K(x,\vec y) - K(x,{y_1}, \cdots ,{y_i}^\prime , \cdots ,{y_m})} \right| \le \frac{A}{{{{(\sum\limits_{j = 1}^m {{{\left| {{{({y_j})}^{ - 1}}x} \right|}_h}} )}^{mn}}}} \cdot \theta (\frac{{{{\left| {{{({{y'}_i})}^{ - 1}}{y_i}} \right|}_h}}}{{\sum\limits_{j = 1}^m {{{\left| {{{({y_j})}^{ - 1}}x} \right|}_h}} }});$
		\item[\emph{(3)}]
		$\left| {K(z,\vec y) - K(x,\vec y)} \right| \le \frac{A}{{{{(\sum\limits_{j = 1}^m {{{\left| {{{({y_j})}^{ - 1}}x} \right|}_h}} )}^{mn}}}} \cdot \theta (\frac{{{{\left| {{{(x)}^{ - 1}}z} \right|}_h}}}{{\sum\limits_{j = 1}^m {{{\left| {{{({y_j})}^{ - 1}}x} \right|}_h}} }}),$
	\end{enumerate}
	where $(2)$ holds for any $i \in \{ 1, \cdots ,m\}$, whenever ${\left| {{{({y_i}^\prime )}^{ - 1}}{y_i}} \right|_h} \le \frac{1}{2}\mathop {\max }\limits_{1 \le j \le m} \{ {\left| {{{({y_j})}^{ - 1}}x} \right|_h}\}$
	and $(3)$ holds whenever ${\left| {{{(x)}^{ - 1}}z} \right|_h} \le \frac{1}{2}\mathop {\max }\limits_{1 \le j \le m} \{ {\left| {{{({y_j})}^{ - 1}}x} \right|_h}\}$.
\end{defin}
When $\theta \left( t \right) = {t^\gamma }$ for some $\gamma  > 0$, we say $K$ is a $m$-linear Calder\'on-Zygmund kernel.

We say $T: {\mathscr S}({\hn}) \times  \cdots  \times {\mathscr S}({\hn}) \to {\mathscr S}'({\hn})$ is an $m$-linear Calder\'on-Zygmund operator with kernel $K$ if
\begin{equation*}\label{NW8}
T(\vec f)(x) = \int_{{\nm}} {K(x,\vec y)\prod\limits_{j = 1}^m {{f_j}({y_j})d{\vec y}}},
\end{equation*}
for any $\vec f \in {C_c^\infty}({\hn}) \times  \cdots  \times {C_c^\infty}({\hn})$ and any $x \notin \bigcap\limits_{j = 1}^m {{\rm{supp}}{f_j}}$, and $T$ can be extended to be a bounded operator from ${L^{{q_1}}} \times  \cdots \times{L^{{q_m}}}$ to ${L^q}$, for some $1 \le {q_1} \cdots , {q_m} < \infty, \frac{1}{q} = \sum\limits_{k = 1}^m {\frac{1}{{{q_k}}}}$.

$T$ is called a $m$-linear Calder\'on-Zygmund operator with Dini kernel $K$ when $K$ is a kernel of type $\theta \in Dini(1)$.

The following Lemma was proved in \cite{LuZhang}, but the same results can be obtained on Heisenberg group $\hn$ and the proof is similar to before.
\begin{lem}\label{C-Z}
	Let $m\in \n$ and $T$ be an $m$-linear Calder\'on-Zygmund operator with Dini kernel $K$. If $p_1,\ldots,p_m\in[1,\infty)$, $p\in(0,\infty)$ with $1/p=\sum_{k=1}^m 1/{p_k}$, and $\vec{\omega}=(\omega_1,\ldots,\omega_m)\in A_{\vec{P}}$, the following results hold:
	\begin{enumerate}[(i)]
		\item If $\mathop {\min }\limits_{1 \le i \le m} \{ {p_i}\}  > 1,$ then there exists a constant ${C}$, independent of $\vec f$, such that 
		\begin{equation*}
		{\left\| {T(\vec f)} \right\|_{{L^{p}}({v_{\vec \omega }})}} \le C{\prod\limits_{i = 1}^m {\left\| {{f_i}} \right\|} _{{L^{{p_i}}}({\omega _i})}};
		\end{equation*}
		\item If $\mathop {\min }\limits_{1 \le i \le m} \{ {p_i}\}  = 1,$ then there exists a constant ${C}$, independent of $\vec f$, such that 
		\begin{equation*}
		{\left\| {T(\vec f)} \right\|_{W{L^{p}}({v_{\vec \omega }})}} \le C{\prod\limits_{i = 1}^m {\left\| {{f_i}} \right\|} _{{L^{{p_i}}}({\omega _i})}}.
		\end{equation*}
	\end{enumerate}
\end{lem}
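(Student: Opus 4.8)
The plan is to adapt the sharp maximal function technique of Lerner--Ombrosi--P\'erez--Torres--Trujillo-Gonz\'alez, as carried out for Dini kernels in \cite{LuZhang}, to the homogeneous-space setting of $\hn$. The whole argument rests on two pillars, the multilinear maximal operator and a pointwise sharp maximal function estimate, so I would first introduce
$$
\mathcal{M}(\vec f)(x)=\sup_{B\ni x}\prod_{i=1}^m\frac{1}{|B|}\int_B|f_i(y_i)|\,dy_i,
$$
where the supremum runs over all Kor\'anyi balls $B\subseteq\hn$ containing $x$. Since $\hn$ equipped with the Kor\'anyi distance and the Haar (Lebesgue) measure is a space of homogeneous type, the multilinear Muckenhoupt theory transfers essentially verbatim: for $\vec\omega\in A_{\vec P}$ one has $\nu_{\vec\omega}\in A_{mp}$, hence $\nu_{\vec\omega}\in A_\infty$, together with the boundedness $\mathcal{M}:L^{p_1}(\omega_1)\times\cdots\times L^{p_m}(\omega_m)\to L^{p}(\nu_{\vec\omega})$ when $\min_i p_i>1$, and the weak endpoint $\mathcal{M}:L^{p_1}(\omega_1)\times\cdots\times L^{p_m}(\omega_m)\to WL^{p}(\nu_{\vec\omega})$ when $\min_i p_i=1$.

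The heart of the proof is the pointwise bound
$$
M^{\#}_{\delta}\big(T(\vec f)\big)(x)\le C\,\mathcal{M}(\vec f)(x),\qquad 0<\delta<\tfrac1m,
$$
where $M^{\#}_\delta g=\big(M^{\#}(|g|^{\delta})\big)^{1/\delta}$ is the sharp maximal function on $\hn$. To establish it I would fix a ball $B=B(x_0,r)$, split each $f_i=f_i^{0}+f_i^{\infty}$ with $f_i^{0}=f_i\chi_{2B}$, and expand the product $\prod_i(f_i^{0}+f_i^{\infty})$ into $2^m$ terms. The fully-local term is controlled via a Kolmogorov inequality together with the unweighted multilinear Calder\'on--Zygmund theory, producing a constant multiple of $\mathcal{M}(\vec f)(x)$. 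Each mixed or fully-global term is estimated by subtracting the constant $T(\dots)(x_0)$ and invoking the smoothness conditions $(2)$--$(3)$; here the modulus $\theta$ replaces the classical power $t^{\gamma}$, so a dyadic annulus $2^{k+1}B\setminus 2^{k}B$ contributes $\theta(2^{-k})$, and the Dini hypothesis $\theta\in Dini(1)$, namely $\int_0^1\theta(t)\,\tfrac{dt}{t}<\infty$, makes the resulting series in $k$ summable.

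With these two ingredients in hand the strong-type conclusion follows from the Fefferman--Stein inequality on spaces of homogeneous type: since $\nu_{\vec\omega}\in A_\infty$, one has $\|M_\delta g\|_{L^p(\nu_{\vec\omega})}\le C\|M^{\#}_\delta g\|_{L^p(\nu_{\vec\omega})}$ whenever the left side is finite, the required a priori finiteness being secured by testing on bounded compactly supported $f_i$ and the unweighted boundedness of $T$. Chaining the estimates gives, for $\min_i p_i>1$,
$$
\|T(\vec f)\|_{L^p(\nu_{\vec\omega})}\le\|M_\delta(T\vec f)\|_{L^p(\nu_{\vec\omega})}\le C\|M^{\#}_\delta(T\vec f)\|_{L^p(\nu_{\vec\omega})}\le C\|\mathcal{M}(\vec f)\|_{L^p(\nu_{\vec\omega})}\le C\prod_{i=1}^m\|f_i\|_{L^{p_i}(\omega_i)}.
$$
For the endpoint $\min_i p_i=1$, where the sharp maximal route is unavailable, I would instead run a Calder\'on--Zygmund decomposition of each $f_i$ adapted to Christ's dyadic cubes on $\hn$, split into good and bad parts, and reduce the weak-type bound to the weak boundedness of $\mathcal{M}$ recorded in the first paragraph. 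The main obstacle is the sharp maximal estimate in the Dini setting: one must organize the $2^m$-term decomposition so that every term carrying at least one global factor yields a clean sum dominated by $\sum_k\theta(2^{-k})<\infty$, and it is precisely here that the Heisenberg dilation law $|\delta_r E|=r^{Q}|E|$ and the doubling property of the Kor\'anyi balls are used.
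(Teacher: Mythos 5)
Your proposal is correct and follows essentially the same route the paper relies on: the paper gives no proof of this lemma at all, merely citing Lu--Zhang \cite{LuZhang} and asserting that ``the same results can be obtained on Heisenberg group $\hn$ and the proof is similar,'' and the transference you sketch (multilinear maximal operator $\mathcal{M}$, the pointwise bound $M^{\#}_{\delta}(T\vec f)\lesssim \mathcal{M}(\vec f)$ for $0<\delta<1/m$ with summability supplied by $\theta\in Dini(1)$, and the Fefferman--Stein inequality for $A_\infty$ weights) is exactly the Lu--Zhang scheme carried to a space of homogeneous type. Since $\mathbb{H}^n$ with the Kor\'anyi metric and Haar measure is doubling, each ingredient indeed carries over, so your write-up supplies precisely the details the paper leaves implicit.
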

Next, we give the following results for multilinear integral operators.
\begin{thm}{\label{main_15}}
The $m$-linear integral operators with kernel $K$, which satisfies size condition
\begin{equation}
| {K(x,\vec y)} | \leq \frac{C}{\left(\sum\limits_{j = 1}^m {|(y_j)^{ - 1}x |_h} \right)^{mQ}}.
\end{equation}
If $1/p=\sum_{k=1}^m 1/{p_k}$ with $p_1,\ldots,p_m\in[1,\infty)$ and $\vec{\omega}=(\omega_1,\ldots,\omega_m)\in {A_{\vec P}} \cap {\left( {{A_\infty }}\right)^m}$. For any $0<\kappa<1$, the following results hold.
\begin{enumerate}[(i)]
	\item If $\mathop {\min }\limits_{1 \le i \le m} \{ {p_i}\}  > 1$, such that H is well-defined on ${L^{{p_1}}}({\omega _1}) \times  \cdots  \times {L^{{p_m}}}({\omega _m})$, which is also bounded from ${L^{{p_1}}}({\omega _1}) \times  \cdots  \times {L^{{p_m}}}({\omega _m})$ to ${L^p}({v_{\vec \omega }}),$ then we have
	\begin{equation}
	{\| {T(\vec f)} \|_{{L^{p,\kappa}}({v_{\vec \omega }})}} \lesssim {\prod\limits_{i = 1}^m {\| {{f_i}} \|} _{{L^{{p_i},\kappa }}({\omega _i})}}.
	\end{equation}
	\item If $\mathop {\min }\limits_{1 \le i \le m} \{ {p_i}\}  = 1,$ such that H is well-defined on ${L^{{p_1}}}({\omega _1}) \times  \cdots  \times {L^{{p_m}}}({\omega _m})$, which is also bounded from ${L^{{p_1}}}({\omega _1}) \times  \cdots  \times {L^{{p_m}}}({\omega _m})$ to $W{L^p}({v_{\vec \omega }}),$ then we have
	\begin{equation}
	{\| {T(\vec f)} \|_{W{L^{p,\kappa}}({v_{\vec \omega }})}} \lesssim {\prod\limits_{i = 1}^m {\| {{f_i}} \|} _{{L^{{p_i},\kappa}}({\omega _i})}}.
	\end{equation}
\end{enumerate}
\end{thm}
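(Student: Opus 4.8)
The plan is to adapt the Komori--Shirai localization scheme to the multilinear Heisenberg setting, reducing the weighted Morrey estimate to the weighted Lebesgue bound that is already assumed (and which, for Calder\'on--Zygmund operators, is supplied by Lemma \ref{C-Z}). Fix a ball $B=B(x_0,r)\subseteq\hn$ and decompose each input as $f_i=f_i^0+f_i^\infty$ with $f_i^0=f_i\chi_{2B}$ and $f_i^\infty=f_i-f_i^0$. Expanding the $m$-linear operator by its linearity in each slot gives $T(\vec f)=T(f_1^0,\dots,f_m^0)+\sum' T(f_1^{\alpha_1},\dots,f_m^{\alpha_m})$, where the primed sum runs over all $(\alpha_1,\dots,\alpha_m)\in\{0,\infty\}^m$ except $(0,\dots,0)$. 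I would estimate $\nu_{\vec\omega}(B)^{-\kappa/p}\|T(\vec f)\|_{L^p(B,\nu_{\vec\omega})}$ piece by piece and then take the supremum over $B$; the (quasi-)subadditivity of the Morrey (quasi-)norm lets me treat the $2^m$ pieces separately.

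For the diagonal piece $T(f_1^0,\dots,f_m^0)$ I would invoke the assumed boundedness $L^{p_1}(\omega_1)\times\cdots\times L^{p_m}(\omega_m)\to L^p(\nu_{\vec\omega})$ and bound each factor using the Morrey norm, $\|f_i^0\|_{L^{p_i}(\omega_i)}\le\omega_i(2B)^{\kappa/p_i}\|f_i\|_{L^{p_i,\kappa}(\omega_i)}$. The resulting product $\prod_{i=1}^m\omega_i(2B)^{\kappa/p_i}$ must then be absorbed into $\nu_{\vec\omega}(B)^{\kappa/p}$; this is exactly where the hypothesis $\vec\omega\in A_{\vec P}\cap(A_\infty)^m$ enters, through the comparison $\prod_{i=1}^m\omega_i(B)^{\kappa/p_i}\lesssim\nu_{\vec\omega}(B)^{\kappa/p}$ afforded by the $A_{\vec P}$ structure together with the doubling property $\nu_{\vec\omega}(2B)\lesssim\nu_{\vec\omega}(B)$. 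These give the diagonal bound $\prod_{i=1}^m\|f_i\|_{L^{p_i,\kappa}(\omega_i)}$.

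For each off-diagonal piece at least one argument is supported on $\hn\setminus 2B$, so I would abandon cancellation and rely only on the size condition. Writing $\hn\setminus 2B=\bigcup_{k\ge1}(2^{k+1}B\setminus 2^kB)$ and noting that $\sum_{j=1}^m|(y_j)^{-1}x|_h\gtrsim 2^k r$ whenever $x\in B$ and some $y_i\in 2^{k+1}B\setminus 2^kB$, the kernel is dominated by $|2^kB|^{-m}$, which yields the pointwise estimate $|T(f_1^{\alpha_1},\dots,f_m^{\alpha_m})(x)|\lesssim\sum_{k\ge1}|2^kB|^{-m}\prod_{i=1}^m\int_{2^{k+1}B}|f_i(y_i)|\,dy_i$ for $x\in B$. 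I would then apply H\"older's inequality with weight $\omega_i$, introducing $\sigma_i=\omega_i^{1-p_i'}$, bound $\big(\int_{2^{k+1}B}|f_i|^{p_i}\omega_i\big)^{1/p_i}\le\omega_i(2^{k+1}B)^{\kappa/p_i}\|f_i\|_{L^{p_i,\kappa}(\omega_i)}$, and use the $A_{\vec P}$ condition in the form $\nu_{\vec\omega}(B')^{1/p}\prod_{i=1}^m\sigma_i(B')^{1/p_i'}\lesssim|B'|^m$ to convert the $\sigma_i$-factors into $\nu_{\vec\omega}(2^{k+1}B)^{-1/p}$.

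After these substitutions, and using the comparison lemma once more, the off-diagonal contribution to $\nu_{\vec\omega}(B)^{-\kappa/p}\|\cdot\|_{L^p(B,\nu_{\vec\omega})}$ collapses to a series of the type $\sum_{k\ge1}\big(\nu_{\vec\omega}(B)/\nu_{\vec\omega}(2^{k+1}B)\big)^{(1-\kappa)/p}$. The hard part will be establishing convergence of this series: it is precisely where $\kappa<1$ is indispensable, and where I would use the reverse-doubling inequality $\nu_{\vec\omega}(B)/\nu_{\vec\omega}(2^{k+1}B)\lesssim 2^{-k\delta}$ for some $\delta>0$, valid because $\nu_{\vec\omega}\in A_{mp}\subseteq A_\infty$; the exponent $(1-\kappa)/p>0$ then forces a summable geometric tail. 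For part (ii), with $\min_i p_i=1$, the only modification is that the diagonal piece is controlled by the assumed weak-type bound from $L^{p_1}(\omega_1)\times\cdots\times L^{p_m}(\omega_m)$ to $WL^p(\nu_{\vec\omega})$ and measured in the weak Morrey quasi-norm, whereas the off-diagonal pieces still obey the same pointwise (hence strong, hence weak) estimate; assembling them with the quasi-triangle inequality gives the weak Morrey bound. Taking $\sup_B$ completes both parts.
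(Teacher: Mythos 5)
Your proposal is correct and follows essentially the same route as the paper: the identical decomposition $f_i=f_i^0+f_i^\infty$ over $2B$, the assumed weighted Lebesgue (resp.\ weak-type) bound for the diagonal term combined with $\nu_{\vec\omega}\in A_{mp}$, doubling, and the comparison $\prod_{i=1}^m\omega_i(B)^{p/p_i}\lesssim\nu_{\vec\omega}(B)$, and for the off-diagonal terms the same size-condition annular pointwise estimate, H\"older with the $A_{\vec P}$ condition, and the $A_\infty$ reverse-doubling inequality $\nu_{\vec\omega}(B)/\nu_{\vec\omega}(2^{j+1}B)\lesssim\bigl(|B|/|2^{j+1}B|\bigr)^{\delta}$ to sum the resulting geometric series. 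Your sketch even spells out the weak-type case (ii), which the paper leaves implicit.
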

In order to prove Theorem \ref{main_15}, we need to give the following lemmas .
\begin{lemma}[\cite{Gra1}]\label{lem1}
	Let $\omega\in A_p$, $p\ge1$, for any ball $B \subseteq \mathbb{H}^n$, there exists a constant $C$ such that
	\begin{equation*}
	\omega(2B)\le C \omega(B).
	\end{equation*}
	In general, for any $\lambda>1$, we have
	\begin{equation}\label{LEQ1}
	\omega(\lambda B)\le C\lambda^{Qp}\omega(B),
	\end{equation}
	where $C$ is nether depend on $B$ nor depend on $\lambda$.
\end{lemma}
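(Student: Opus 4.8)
The plan is to derive \eqref{LEQ1} directly from the $A_p$ condition, using only the dilation scaling of Haar measure on $\mathbb{H}^n$; the argument is the classical one (hence the attribution to \cite{Gra1}) and transfers verbatim from the Euclidean setting, the homogeneous dimension $Q$ entering through $|B(x,r)|=\Omega_Q r^Q$. I would first treat the case $1<p<\infty$. The key preliminary estimate comes from applying H\"{o}lder's inequality to $|B|=\int_B \omega^{1/p}\,\omega^{-1/p}\,dx$ with exponents $p$ and $p'$, noting $p'/p=\frac{1}{p-1}$ and $p/p'=p-1$, which after raising to the $p$-th power yields
$$
|B|^p\le \omega(B)\left(\int_B \omega(x)^{-\frac{1}{p-1}}\,dx\right)^{p-1},
\qquad\text{i.e.}\qquad
\left(\int_B \omega(x)^{-\frac{1}{p-1}}\,dx\right)^{p-1}\ge \frac{|B|^p}{\omega(B)}.
$$

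Next I would apply the $A_p$ condition to the \emph{larger} ball $\lambda B$ rather than to $B$, rewriting it as
$$
\frac{\omega(\lambda B)}{|\lambda B|^p}\left(\int_{\lambda B}\omega(x)^{-\frac{1}{p-1}}\,dx\right)^{p-1}\le C.
$$
Since $B\subseteq\lambda B$ and the integrand $\omega^{-1/(p-1)}$ is nonnegative, the dual integral only increases when the domain is extended from $B$ to $\lambda B$; combining the $A_p$ bound with the lower bound from the previous step (and using that $\omega(\lambda B)/|\lambda B|^p\ge 0$) gives
$$
\frac{\omega(\lambda B)}{|\lambda B|^p}\cdot\frac{|B|^p}{\omega(B)}\le C,
\qquad\text{hence}\qquad
\omega(\lambda B)\le C\,\frac{|\lambda B|^p}{|B|^p}\,\omega(B).
$$
The Heisenberg structure enters only at the final step: from $|B(x,r)|=\Omega_Q r^Q$ we have $|\lambda B|=\lambda^Q|B|$, so $|\lambda B|^p/|B|^p=\lambda^{Qp}$, yielding $\omega(\lambda B)\le C\lambda^{Qp}\omega(B)$ with $C$ the $A_p$ constant, independent of $B$ and $\lambda$. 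Taking $\lambda=2$ recovers the stated doubling inequality.

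For the endpoint $p=1$ one argues directly from the $A_1$ condition. Writing $m=\mathop{\mbox{ess\,inf}}_{x\in\lambda B}\omega(x)$, monotonicity of the essential infimum under set inclusion together with the trivial average bound gives $m\le\mathop{\mbox{ess\,inf}}_{x\in B}\omega(x)\le|B|^{-1}\omega(B)$, while the $A_1$ inequality applied on $\lambda B$ gives $\omega(\lambda B)\le C|\lambda B|\,m$. Combining these, $\omega(\lambda B)\le C\,(|\lambda B|/|B|)\,\omega(B)=C\lambda^Q\omega(B)$, which matches $\lambda^{Qp}$ at $p=1$.

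I anticipate no genuine obstacle, since the statement is classical and the group contributes nothing beyond the scaling $|\lambda B|=\lambda^Q|B|$. The only points requiring a line of care are the bookkeeping of the conjugate exponents ($p'/p=1/(p-1)$, $p/p'=p-1$) in the H\"{o}lder step, and the monotonicity of the essential infimum invoked in the $p=1$ case.
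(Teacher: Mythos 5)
Your proof is correct and is exactly the classical argument the paper points to by citing \cite{Gra1} (the paper itself states this lemma without proof): apply the $A_p$ condition on $\lambda B$, bound the dual integral from below via H\"{o}lder's inequality on $B\subseteq\lambda B$, and invoke the scaling $|\lambda B|=\lambda^{Q}|B|$ from $|B(x,r)|=\Omega_Q r^Q$, with the $A_1$ endpoint handled by monotonicity of the essential infimum. All exponent bookkeeping ($p'/p=1/(p-1)$, $p/p'=p-1$) checks out, so there is nothing to add.
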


\begin{lem}[\cite{Gra1}]\label{lem3}
	Let $\omega\in RH_r$ with $r>1$, then there exists  a constant $C$ such that
	\begin{equation}\label{AAA5}
	\frac{\omega(E)}{\omega(B)}\le C\left(\frac{|E|}{|B|}\right)^{(r-1)/r}
	\end{equation}
	for any measurable subset $E$ of a ball $B$.
\end{lem}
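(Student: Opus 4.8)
The plan is to derive the inequality directly from the reverse Hölder inequality defining $RH_r$ together with a single application of Hölder's inequality. The strategy rests on viewing $\omega(E)$ as the pairing of $\omega$ against the indicator $\chi_E$ over the whole ball $B$, which lets me separate the contribution of $\omega$ (controlled by the $RH_r$ hypothesis) from the contribution of $E$ (controlled by its Lebesgue measure $|E|$).

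First I would write $\omega(E)=\int_E\omega(x)\,dx=\int_B\omega(x)\chi_E(x)\,dx$, which is valid since $E\subseteq B$. Applying Hölder's inequality with the conjugate exponents $r$ and $r'=r/(r-1)$ gives
$$\omega(E)\le\left(\int_B\omega(x)^r\,dx\right)^{1/r}\left(\int_B\chi_E(x)^{r'}\,dx\right)^{1/r'}.$$
Because $\chi_E$ takes only the values $0$ and $1$, the second factor equals exactly $|E|^{1/r'}=|E|^{(r-1)/r}$.

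Next I would invoke the reverse Hölder inequality for $\omega\in RH_r$, which yields
$$\left(\int_B\omega(x)^r\,dx\right)^{1/r}=|B|^{1/r}\left(\frac{1}{|B|}\int_B\omega(x)^r\,dx\right)^{1/r}\le C\,|B|^{1/r}\cdot\frac{1}{|B|}\int_B\omega(x)\,dx=C\,|B|^{1/r-1}\omega(B).$$
Combining the two estimates and using $1/r-1=-(r-1)/r$ produces $\omega(E)\le C\,\omega(B)\,|B|^{-(r-1)/r}|E|^{(r-1)/r}$, which is precisely $\omega(E)/\omega(B)\le C\left(|E|/|B|\right)^{(r-1)/r}$ after dividing by $\omega(B)$.

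There is no serious obstacle here: the argument is a clean combination of Hölder and reverse Hölder, and the only bookkeeping is the exponent arithmetic $1/r'=(r-1)/r$ and $1/r-1=-(r-1)/r$. The constant $C$ is exactly the one appearing in the $RH_r$ definition, hence independent of both $B$ and $E$, so the estimate holds uniformly. The one point worth noting for legitimacy of the final division is that $\omega(B)>0$; this is automatic since $\omega$ is a weight (nonnegative and locally integrable) and $B$ has positive Haar measure on $\mathbb{H}^n$.
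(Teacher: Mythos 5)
Your proof is correct: the paper does not prove this lemma at all but cites it from Grafakos, and your argument --- writing $\omega(E)=\int_B\omega\chi_E\,dx$, applying H\"older's inequality with exponents $r$ and $r'=r/(r-1)$, and then invoking the reverse H\"older inequality to get $\bigl(\int_B\omega^r\,dx\bigr)^{1/r}\le C\,|B|^{1/r-1}\omega(B)$ --- is exactly the standard proof in that reference, with the exponent bookkeeping $1/r'=(r-1)/r$ and $1/r-1=-(r-1)/r$ handled correctly. The only cosmetic caveat is the division by $\omega(B)$: for a weight that vanishes a.e.\ on $B$ the quotient form is vacuous, but the multiplicative form $\omega(E)\le C\,\omega(B)\left(|E|/|B|\right)^{(r-1)/r}$, which is what is actually used, holds trivially in that degenerate case.
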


\begin{lemma}[\cite{Lerner}]\label{lem4}
	Let $1/p=\sum_{k=1}^m 1/{p_k}$ with $p_1,\ldots,p_m\in[1,\infty)$, then $\vec{\omega}=(\omega_1,\ldots,\omega_m)\in A_{\vec{P}}$ if and only if
	\begin{equation*}\left\{
	\begin{aligned}
	&\nu_{\vec{\omega}}\in A_{mp},\\
	&\omega_i^{1-p'_i}\in A_{mp'_i},\quad i=1,\ldots,m,
	\end{aligned}\right.
	\end{equation*}
	where $\nu_{\vec{\omega}}=\prod_{i=1}^m \omega_i^{p/{p_i}}$ and the condition $\omega_i^{1-p'_i}\in A_{mp'_i}$ in the case $p_i=1$ is understood as $\omega_i^{1/m}\in A_1$.
\end{lemma}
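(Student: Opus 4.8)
The plan is to fix an arbitrary ball $B=B(x_0,r)\subseteq\hn$ and to estimate $\nu_{\vec\omega}(B)^{-\kappa/p}\|T(\vec f)\|_{L^p(B,\nu_{\vec\omega})}$ (resp.\ its weak analogue) uniformly in $B$, where I write $T$ for the operator. First I would record the preliminary comparison that makes the two-scale bookkeeping work: since $\vec\omega\in A_{\vec P}\cap(A_\infty)^m$, one has
\[
\prod_{i=1}^m\omega_i(B)^{1/p_i}\lesssim\nu_{\vec\omega}(B)^{1/p}\quad\text{for every ball }B\subseteq\hn,
\]
with constant independent of $B$. The reverse inequality $\nu_{\vec\omega}(B)^{1/p}\le\prod_i\omega_i(B)^{1/p_i}$ is immediate from H\"older's inequality, but the displayed bound genuinely needs the $(A_\infty)^m$ hypothesis (it can fail for general $A_{\vec P}$ weights); I would derive it from the reverse H\"older property of the individual weights $\omega_i$. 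Next I would perform the dyadic decomposition $f_i=\sum_{k\ge0}f_i^k$ with $f_i^0=f_i\chi_{2B}$ and $f_i^k=f_i\chi_{2^{k+1}B\setminus2^kB}$ for $k\ge1$, and expand $T(\vec f)=\sum_{\vec k}T(f_1^{k_1},\dots,f_m^{k_m})$ by multilinearity, separating the purely local term $\vec k=\vec0$ from the global terms $\vec k\ne\vec0$.

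For the local term $T(f_1^0,\dots,f_m^0)$ I would invoke the assumed $L^{p_1}(\omega_1)\times\cdots\times L^{p_m}(\omega_m)\to L^p(\nu_{\vec\omega})$ boundedness of $T$ (case (i)) to get $\|T(f_1^0,\dots,f_m^0)\|_{L^p(\nu_{\vec\omega})}\lesssim\prod_i\|f_i\|_{L^{p_i}(2B,\omega_i)}$, then bound each factor by $\omega_i(2B)^{\kappa/p_i}\|f_i\|_{L^{p_i,\kappa}(\omega_i)}$ straight from the Morrey norm. Multiplying by $\nu_{\vec\omega}(B)^{-\kappa/p}$ and applying the doubling property (Lemma \ref{lem1}) together with the comparison above turns the prefactor $\nu_{\vec\omega}(B)^{-\kappa/p}\prod_i\omega_i(2B)^{\kappa/p_i}$ into a harmless constant, which settles this term.

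The global terms are where the size condition on $K$ enters. For $\vec k\ne\vec0$, write $k^*=\max_i k_i$; for $x\in B$ and $y_i\in\mathrm{supp}\,f_i^{k_i}$ the triangle inequality for $d$ gives $\sum_j|(y_j)^{-1}x|_h\gtrsim 2^{k^*}r$, so the size estimate yields the pointwise bound $|T(f_1^{k_1},\dots,f_m^{k_m})(x)|\lesssim(2^{k^*}r)^{-mQ}\prod_i\int_{2^{k_i+1}B}|f_i|$. I would then estimate $\int_{2^{k_i+1}B}|f_i|$ by H\"older's inequality (splitting off $\omega_i^{1/p_i}$), bound the $f_i$ factor by $\omega_i(2^{k_i+1}B)^{\kappa/p_i}\|f_i\|_{L^{p_i,\kappa}}$, enlarge every ball to the common $2^{k^*+1}B$, and collect the $\prod_i(\frac{1}{|B|}\int\omega_i^{1-p_i'})^{1/p_i'}$ factors via the $A_{\vec P}$ condition. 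Using the Heisenberg volume scaling $|2^{k^*+1}B|\approx2^{k^*Q}|B|$, the powers of $2^{k^*}r$ cancel exactly; after taking the $L^p(B,\nu_{\vec\omega})$ norm and multiplying by $\nu_{\vec\omega}(B)^{-\kappa/p}$, the comparison lemma and this cancellation reduce the prefactor of the $\vec k$-term to $\big(\nu_{\vec\omega}(B)/\nu_{\vec\omega}(2^{k^*+1}B)\big)^{(1-\kappa)/p}$.

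The main obstacle is to sum the global contributions. Since $\nu_{\vec\omega}\in A_{mp}\subseteq A_\infty$ by Lemma \ref{lem4}, the reverse H\"older inequality (Lemma \ref{lem3}) gives $\nu_{\vec\omega}(B)/\nu_{\vec\omega}(2^{k^*+1}B)\lesssim 2^{-k^*Q\delta}$ for some $\delta>0$, so each prefactor decays like $2^{-ck^*}$ with $c=Q\delta(1-\kappa)/p>0$ (here $0<\kappa<1$ is essential). As there are only $O\big((k^*)^{m-1}\big)$ multi-indices with $\max_i k_i=k^*$, the series $\sum_{\vec k\ne\vec0}2^{-ck^*}$ converges, and combining with the local estimate gives case (i). For case (ii), with $\min_i p_i=1$, I would replace the strong bound on the local term by the assumed weak $L^{p_1}(\omega_1)\times\cdots\to WL^p(\nu_{\vec\omega})$ estimate and interpret $(\frac{1}{|B|}\int_B\omega_i^{1-p_i'})^{1/p_i'}$ as $(\mathrm{ess\,inf}_B\omega_i)^{-1}$ exactly as in the definition of $A_{\vec P}$; the global terms are pointwise bounded on $B$, hence controlled in the stronger $L^p(B,\nu_{\vec\omega})$ norm precisely as above, and they are reassembled using the quasi-triangle inequality for $WL^p$, whose constant is absorbed by the geometric decay. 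The two delicate points to verify carefully are the comparison lemma and the exact cancellation of the scales $2^{k^*}r$, which together with the $A_\infty$ decay of $\nu_{\vec\omega}$ are what force the hypotheses $\vec\omega\in A_{\vec P}\cap(A_\infty)^m$ and $0<\kappa<1$.
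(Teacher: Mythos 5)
Your proposal does not prove the statement it was asked to prove. The statement is Lemma \ref{lem4}, the characterization of the multiple weight class due to Lerner, Ombrosi, P\'erez, Torres and Trujillo-Gonz\'alez: $\vec{\omega}\in A_{\vec P}$ if and only if $\nu_{\vec{\omega}}\in A_{mp}$ and $\omega_i^{1-p_i'}\in A_{mp_i'}$ for every $i$. What you have written is instead a proof sketch of Theorem \ref{main_15}, the weighted Morrey boundedness of the multilinear operator under the size condition on $K$: the decomposition of each $f_i$ into a local piece on $2B$ and dyadic annuli, the comparison $\prod_{i=1}^m\omega_i(B)^{p/p_i}\lesssim\nu_{\vec{\omega}}(B)$ (which is Lemma \ref{lem5}), the $A_\infty$ decay of $\nu_{\vec{\omega}}$ via Lemma \ref{lem3}, and the geometric summation over scales. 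None of this bears on the equivalence of weight conditions, which is a purely measure-theoretic statement about averages of the $\omega_i$ over balls and involves no operator, no Morrey parameter $\kappa$, and no kernel. Worse, your argument explicitly invokes the conclusion of Lemma \ref{lem4} as an ingredient (``since $\nu_{\vec{\omega}}\in A_{mp}$ by Lemma \ref{lem4}\dots''), so read as a proof of that lemma it is circular on its face.

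For comparison: the paper itself gives no proof of Lemma \ref{lem4}; it is quoted from \cite{Lerner}, where the argument is roughly as follows. The elementary direction assembles the $A_{\vec P}$ condition from the $m+1$ scalar conditions by H\"older's inequality, using the exponent identity $\frac{1}{mp}+\sum_{i=1}^m\frac{1}{mp_i'}=1$, which follows from $\sum_{i=1}^m\frac{1}{p_i'}=m-\frac{1}{p}$. The converse direction verifies the $A_{mp}$ condition for $\nu_{\vec{\omega}}$ and the $A_{mp_i'}$ condition for each $\omega_i^{1-p_i'}$ directly from the $A_{\vec P}$ condition, again by H\"older applied to the dual-exponent averages, with the endpoint case $p_i=1$ handled through the essential-infimum interpretation and $\omega_i^{1/m}\in A_1$. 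If you want to contribute a proof of this lemma, that H\"older bookkeeping is the required content; the Morrey-space argument you sketched belongs to (and largely reproduces) the paper's proof of Theorem \ref{main_15} instead.
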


\begin{lem}[\cite{Wang1}]\label{lem5}
	Let $m\in \n,$ $p_1,\ldots,p_m\in[1,\infty)$ and $1/p=\sum_{k=1}^m 1/{p_k}$ with $p\in(0,\infty)$. Assume that $\omega_1,\ldots,\omega_m\in A_\infty$ and $\nu_{\vec{\omega}}=\prod_{i=1}^m \omega_i^{p/{p_i}}$, then for any ball $B,$ we have
	\begin{equation}\label{N1}
	\prod\limits_{i = 1}^m {{{\left( {\int_B {{\omega _i}} (x){\mkern 1mu} dx} \right)}^{p/{p_i}}}}  \lesssim \int_B {{\nu _{\vec \omega }}} (x){\mkern 1mu} dx.
	\end{equation}
\end{lem}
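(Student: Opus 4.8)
The plan is to follow the standard Komori–Shirai decomposition, transferring the weighted Lebesgue bound (which is exactly the hypothesis, supplied in the Calder\'on--Zygmund case by Lemma \ref{C-Z}) to the weighted Morrey scale by splitting each input into a near part and a far part. Fix a ball $B=B(x_0,r)$ and write $f_j=f_j^0+f_j^\infty$ with $f_j^0=f_j\chi_{2B}$ and $f_j^\infty=f_j\chi_{(2B)^c}$. By multilinearity $T(\vec f)$ equals a finite sum of $2^m$ terms $T(f_1^{\alpha_1},\dots,f_m^{\alpha_m})$ with $\alpha\in\{0,\infty\}^m$. I will estimate, for each fixed $B$, the normalized quantity $\nu_{\vec\omega}(B)^{-\kappa/p}\|T(\vec f)\|_{L^p(B,\nu_{\vec\omega})}$ (and its weak analogue for part (ii)) and bound it by $\prod_{j=1}^m\|f_j\|_{L^{p_j,\kappa}(\omega_j)}$ uniformly in $B$.

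For the purely local term $T(f_1^0,\dots,f_m^0)$, I would first apply the assumed $L^{p_1}(\omega_1)\times\cdots\times L^{p_m}(\omega_m)\to L^p(\nu_{\vec\omega})$ boundedness to get $\|T(f_1^0,\dots,f_m^0)\|_{L^p(B,\nu_{\vec\omega})}\lesssim\prod_j\|f_j\chi_{2B}\|_{L^{p_j}(\omega_j)}\le\prod_j\omega_j(2B)^{\kappa/p_j}\|f_j\|_{L^{p_j,\kappa}(\omega_j)}$, the last step being the definition of the Morrey norm (Definition \ref{main_3}). The normalization factor $\nu_{\vec\omega}(B)^{-\kappa/p}$ is then absorbed using Lemma \ref{lem5} in the form $\prod_j\omega_j(2B)^{\kappa/p_j}\lesssim\nu_{\vec\omega}(2B)^{\kappa/p}$ together with the doubling of $\nu_{\vec\omega}\in A_{mp}$ (Lemma \ref{lem4} plus Lemma \ref{lem1}), which gives $\nu_{\vec\omega}(2B)\lesssim\nu_{\vec\omega}(B)$. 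This collapses the local term to $\prod_j\|f_j\|_{L^{p_j,\kappa}(\omega_j)}$.

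For each global term (at least one $\alpha_i=\infty$) I would use only the size condition on $K$. For $x\in B$ the denominator $(\sum_j|(y_j)^{-1}x|_h)^{mQ}$ is governed by the largest $|(y_j)^{-1}x_0|_h$; decomposing the far region into dyadic shells $2^{k+1}B\setminus 2^kB$ and observing that on the $k$-th shell all $y_j$ lie in $2^{k+1}B$ while the denominator is $\gtrsim|2^kB|^m$, I obtain the pointwise (constant in $x$) bound $|T(f_1^{\alpha_1},\dots,f_m^{\alpha_m})(x)|\lesssim\sum_{k\ge1}|2^kB|^{-m}\prod_j\int_{2^{k+1}B}|f_j|$ for $x\in B$. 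Applying H\"older's inequality to each factor with the conjugate pair $(\omega_j,\omega_j^{1-p_j'})$ and the Morrey norm yields $\int_{2^{k+1}B}|f_j|\le\|f_j\|_{L^{p_j,\kappa}(\omega_j)}\,\omega_j(2^{k+1}B)^{\kappa/p_j}\big(\int_{2^{k+1}B}\omega_j^{1-p_j'}\big)^{1/p_j'}$ (with the ess-sup convention on any endpoint factor $p_j=1$ in part (ii)); the $A_{\vec P}$ condition bounds the product of the dual integrals by $|2^{k+1}B|^m\nu_{\vec\omega}(2^{k+1}B)^{-1/p}$, cancelling $|2^kB|^{-m}$, and Lemma \ref{lem5} again turns $\prod_j\omega_j(2^{k+1}B)^{\kappa/p_j}$ into $\nu_{\vec\omega}(2^{k+1}B)^{\kappa/p}$, so the $k$-th shell contributes $\prod_j\|f_j\|_{L^{p_j,\kappa}(\omega_j)}\,\nu_{\vec\omega}(2^{k+1}B)^{(\kappa-1)/p}$.

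Finally, after the Morrey normalization the total global contribution is controlled by $\prod_j\|f_j\|_{L^{p_j,\kappa}(\omega_j)}\sum_{k\ge1}\big(\nu_{\vec\omega}(B)/\nu_{\vec\omega}(2^{k+1}B)\big)^{(1-\kappa)/p}$; since $\nu_{\vec\omega}\in A_{mp}\subset A_\infty$ enjoys the reverse H\"older/reverse-doubling property (Lemma \ref{lem3}), one has $\nu_{\vec\omega}(B)/\nu_{\vec\omega}(2^{k+1}B)\lesssim 2^{-(k+1)Q\delta}$ for some $\delta>0$, and as $0<\kappa<1$ forces $(1-\kappa)/p>0$ the geometric series converges, proving part (i). For part (ii) the local term is treated identically but with the weak-type hypothesis and the weak Morrey norm, while each global term is pointwise constant on $B$, so its weak $L^p(B,\nu_{\vec\omega})$ norm is dominated by the same strong bound and the shells sum up exactly as above. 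The main obstacle I anticipate is the pointwise control of the global terms: because the kernel couples all variables through the single quantity $\sum_j|(y_j)^{-1}x|_h$, some care is needed to justify that the dominant scale is fixed by the farthest argument and that the resulting dyadic sum genuinely decouples into the clean product $\prod_j\int_{2^{k+1}B}|f_j|$.
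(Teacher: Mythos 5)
Your proposal does not prove the statement at hand. The statement is Lemma \ref{lem5}: the purely measure-theoretic inequality $\prod_{i=1}^m\bigl(\int_B\omega_i\,dx\bigr)^{p/p_i}\lesssim\int_B\nu_{\vec{\omega}}\,dx$ for $A_\infty$ weights. What you have written is instead a proof sketch of Theorem \ref{main_15} (the weighted Morrey boundedness of the multilinear operator), i.e.\ the result that \emph{uses} this lemma. Worse, your argument explicitly invokes Lemma \ref{lem5} itself twice --- once to absorb the factor $\prod_j\omega_j(2B)^{\kappa/p_j}$ into $\nu_{\vec{\omega}}(2B)^{\kappa/p}$ in the local term, and once on each dyadic shell $2^{k+1}B$ --- so read as a proof of the lemma it is circular, and read as a proof of the theorem it answers the wrong question. (The paper itself does not prove this lemma either; it is imported from \cite{Wang1}.)

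For comparison, the lemma has a short self-contained proof that has nothing to do with the operator $T$ or the Morrey decomposition. Since each $\omega_i\in A_\infty$, the reverse Jensen characterization of $A_\infty$ gives
\begin{equation*}
\frac{1}{|B|}\int_B \omega_i(x)\,dx \;\lesssim\; \exp\left(\frac{1}{|B|}\int_B \log \omega_i(x)\,dx\right)
\quad\text{for every ball } B.
\end{equation*}
Raise the $i$-th inequality to the power $p/p_i$, multiply over $i=1,\ldots,m$, and use $\sum_{i=1}^m p/p_i=1$ to combine the exponentials:
\begin{equation*}
\prod_{i=1}^m\left(\frac{1}{|B|}\int_B \omega_i\,dx\right)^{p/p_i}
\;\lesssim\; \exp\left(\frac{1}{|B|}\int_B \log \nu_{\vec{\omega}}(x)\,dx\right)
\;\le\; \frac{1}{|B|}\int_B \nu_{\vec{\omega}}(x)\,dx,
\end{equation*}
where the last step is Jensen's inequality for the convex function $\exp$. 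Multiplying through by $|B|=\prod_{i=1}^m |B|^{p/p_i}$ yields exactly (\ref{N1}). If you want to salvage your write-up, it is a reasonable (and essentially the paper's) proof of Theorem \ref{main_15}, but the lemma it leans on must be established independently as above, not assumed.
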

Now, we begin to proof Theorem \ref{main_15}.
\begin{proof}[Proof of Theorem $\ref{main_15}$]
For any ball $B=B(x_0,r)$, let $f_i=f^0_i+f^{\infty}_i$, where $f^0_i=f_i\chi_{2B}$, for any $i=1,\ldots,m$ and $\chi_{2B}$ denotes the characteristic function of $2B$. Then we obtain
	\begin{equation}
	\begin{aligned} \prod_{i=1}^m f_i(y_i) & =\prod_{i=1}^m(f_i^0(y_i)+f_i^{\infty}(y_i)) \\ & =\sum_{\alpha_1, \ldots ,\alpha_m \in\{0, \infty\}} f_1^{\alpha_1}(y_1) \cdots f_m^{\alpha_m}(y_m) \\ & =\prod_{i=1}^m f_i^0(y_i)+\sum_{\alpha_1+\cdots+\alpha_m \neq 0} f_1^{\alpha_1}(y_1) \cdots f_m^{\alpha_m}(y_m).
   \end{aligned}
	\end{equation}
	Thus, we have
	\begin{equation}
		\begin{aligned}
			&{\nu _{\vec \omega }}{(B)^{ - \frac{\kappa }{p}}}{\| {H({f_1}, \ldots ,{f_m})} \|_{{L^p}(B,{\nu _{\vec \omega }}dx)}}\\
			\le &{\nu _{\vec \omega }}{(B)^{ - \frac{\kappa }{p}}}{\| {H(f_1^0, \ldots ,f_m^0)} \|_{{L^p}(B,{\nu _{\vec \omega }}dx)}} + \sum\limits_{{\alpha _1} +  \cdots  + {\alpha _m} \ne 0}^{} {{\nu _{\vec \omega }}{{(B)}^{ - \frac{\kappa }{p}}}{{\| {T(f_1^{{\alpha _1}}, \ldots ,f_m^{{\alpha _m}})} \|}_{_{{L^p}(B,{\nu _{\vec \omega }}dx)}}}} \\
			=: &{I^0} + \sum\limits_{{\alpha _1} +  \cdots  + {\alpha _m} \ne 0}^{} {{I^{{\alpha _1} \ldots ,{\alpha _m}}}}.
		\end{aligned}
	\end{equation}
	Next, we  need to prove
	\begin{equation}\label{N2}
		I^{\alpha_1, \ldots, \alpha_m} \lesssim \prod_{i=1}^m\|f_i\|_{L^{p_i, \kappa}(\omega_i)},
	\end{equation}
	where ${\alpha_i}\in(0,\infty) $, for any $i = 1, \ldots ,m$. Using Lemma \ref{lem4}, we have $\nu_{\vec{\omega}}\in A_{mp}$. Lemma \ref{lem1} and Lemma \ref{lem5} allows us to get
	\begin{equation}
		\begin{aligned}
I^0 & \lesssim \frac{1}{\nu_{\vec{\omega}}(B)^{\kappa / p}} \prod_{i=1}^m\left(\int_{2 B}|f_i(x)|^{p_i} \omega_i(x) d x\right)^{1 / p_i} \\
& \lesssim \prod_{i=1}^m\|f_i\|_{L^{p_i, \kappa}(\omega_i)} \cdot \frac{\prod_{i=1}^m \omega_i(2 B)^{\kappa / p_i}}{\nu_{\vec{\omega}}(B)^{\kappa / p}} \\
& \lesssim \prod_{i=1}^m\|f_i\|_{L^{p_i, \kappa(\omega_i)}} \cdot \frac{\nu_{\vec{\omega}}(2 B)^{\kappa / p}}{\nu_{\vec{\omega}}(B)^{\kappa / p}} \\
& \lesssim \prod_{i=1}^m\|f_i\|_{L^{p_i, \kappa}(\omega_i)} .
       \end{aligned}
	\end{equation}
	For $H$, we have
\begin{equation}
\begin{aligned}
|T(\vec{f})(x)| \lesssim \int_{\mathbb{H}^{nm}} \frac{|\prod_{j=1}^m f_j(y_j)|}{\left(\sum_{i=1}^m|(y_i)^{-1} x|_h\right)^{mQ}} d y_1 \cdots d y_m .
\end{aligned}
\end{equation}
	
To obtain these conclusions, we need establish some geometric relationships by trigonometric inequality as follows.\\
(i) If $x \in B$, $y \in(2 B)^c$, we  have
$$
\left|(y)^{-1} x\right|_h \approx\left|(y)^{-1} x_0\right|_h ;
$$
(ii) If $x \in B$, $y \in 2^{j+1} B \backslash 2^j B$, $j \in \mathbb{N}$, we  have
$$
|{(y)^{ - 1}}x|_h^Q \approx \left| {{2^{j + 1}}B} \right| .
$$
	For  other terms, we first consider the case of $\alpha_1=\cdots=\alpha_m=\infty$. For $x\in B$, we have
	\begin{equation}
\begin{aligned}
\left|H(f_1^{\infty}, \ldots, f_m^{\infty})(x)\right| & \lesssim \int_{\mathbb{H}^{nm} \backslash(2 B)^m} \frac{|f_1(y_1) \ldots f_m(y_m)|}{(\sum_{i=1}^m|(y_i)^{-1} x|_h)^{mQ}} d y_1 \cdots d y_m \\
& \leq \sum_{j=1}^{\infty} \int_{(2^{j+1} B)^m \backslash(2^j B)^m} \frac{|f_1(y_1) \cdots f_m(y_m)|}{(\sum_{i=1}^m|(y_i)^{-1} x|_h)^{mQ}} d y_1 \cdots d y_m \\
&\lesssim \sum_{j=1}^{\infty} \prod_{i=1}^m \frac{1}{|2^{j+1} B|} \int_{2^{j+1} B}|f_i(y_i)| d y_i .
\end{aligned}
   \end{equation}
	By using H\"older's inequality, $A_{\vec{P}}$ condition and Lemma \ref{lem5}, we obtain
	\begin{equation}
\begin{aligned}
|T(f_1^{\infty}, \ldots, f_m^{\infty})(x)| & \lesssim \sum_{j=1}^{\infty} \prod_{i=1}^m \frac{1}{|2^{j+1} B|}\left(\int_{2^{j+1} B}\left|f_i(y_i)\right|^{p_i} \omega_i(y_i) d y_i\right)^{1 / p_i}\left(\int_{2^{j+1} B} \omega_i(y_i)^{1-p_i^{\prime}} d y_i\right)^{1 / p_i^{\prime}} \\
& \lesssim \sum_{j=1}^{\infty} \frac{1}{|2^{j+1} B|^m} \cdot \frac{\left|2^{j+1} B\right|^{\frac{1}{p}+\sum_{i=1}^m\left(1-\frac{1}{p_i}\right)}}{\nu_{\vec{\omega}}(2^{j+1} B)^{1 / p}} \prod_{i=1}^m\left(\|f_i\|_{L^{p_i, \kappa}(\omega_i)} \omega_i(2^{j+1} B)^{\kappa / p_i}\right) \\
& =\left(\prod_{i=1}^m\|f_i\|_{L^{p_i, \kappa}(\omega_i)}\right) \cdot \sum_{j=1}^{\infty}\left(\frac{\prod_{i=1}^m \omega_i(2^{j+1} B)^{\kappa / p_i}}{\nu_{\vec{\omega}}(2^{j+1} B)^{1 / p}}\right) \\
& \lesssim\left(\prod_{i=1}^m\|f_i\|_{L^{p_i, \kappa}(\omega_i)}\right) \cdot \sum_{j=1}^{\infty} \nu_{\vec{\omega}}(2^{j+1} B)^{(\kappa-1) / p}.
\end{aligned}
\end{equation}
	Thus, by virtue of $\nu_{\vec{w}}\in A_{mp} \subseteq A_\infty$ and  Lemma \ref{lem3}, we have
	\begin{equation}
		\begin{aligned}
I^{\infty, \ldots, \infty} & \lesssim\left(\prod_{i=1}^m\left\|f_i\right\|_{L^{p_i, \kappa}\left(\omega_i\right)}\right) \cdot \sum_{j=1}^{\infty} \frac{\nu_{\vec{\omega}}(B)^{(1-\kappa) / p}}{\nu_{\vec{\omega}}\left(2^{j+1} B\right)^{(1-\kappa) / p}} \\
& \lesssim\left(\prod_{i=1}^m\left\|f_i\right\|_{L^{p_i, \kappa}\left(\omega_i\right)}\right) \cdot \sum_{j=1}^{\infty}\left(\frac{|B|}{\left|2^{j+1} B\right|}\right)^{\delta(1-\kappa) / p} \\
& \lesssim \prod_{i=1}^m\left\|f_i\right\|_{L^{p_i, \kappa}(\omega_i)},
\end{aligned}
	\end{equation}
	where we used the fact
	\begin{equation}\label{EQQ009}
		\frac{\nu_{\vec{w}}(B)}{\nu_{\vec{w}}(2^{j+1}B)}\lesssim\left(\frac{|B|}{|2^{j+1}B|}\right)^\delta.
	\end{equation}
	 The last inequality holds since $0<\kappa<1$ and $\delta>0$.\\
	
Without loss of generality, we may assume that ${\alpha _1} =  \cdots  = {\alpha _l} = \infty $ and ${\alpha _{l + 1}} =  \cdots  = {\alpha _m} = 0$. For any $x \in B,{y_1} \in {(2B)^c}$, we have
	\begin{equation}
    \begin{aligned}
|T(f_1^{\infty}, \ldots, f_{l}^{\infty}, f_{l+1}^0, \ldots, f_m^0)(x)|
\lesssim & \int_{(\mathbb{H}^n)^{l} \backslash(2 B)^{l}} \int_{(2 B)^{m-l}} \frac{|f_1(y_1) \cdots f_m(y_m)|}{(\sum_{i=1}^m|(y_i)^{-1} x|_h)^{mQ}} d y_1 \cdots d y_m \\
\lesssim& \left(\prod_{i=l+1}^m \int_{2 B}|f_i(y_i)| d y_i\right)\\
& \times \sum_{j=1}^{\infty} \frac{1}{|2^{j+1} B|^m} \int_{(2^{j+1} B)^{l} \backslash(2^j B)^{l}}|f_1(y_1) \ldots f_{l}(y_{l})| d y_1 \cdots d y_{l} \\
\leq & \sum_{j=1}^{\infty} \prod_{i=1}^m \frac{1}{|2^{j+1} B|} \int_{2^{j+1} B}|f_i(y_i)| d y_i,
\end{aligned}
\end{equation}
	where the second inequality is valid, then we have
$$
| {{2^{j + 1}}B} | \approx | {{{( {{y_1}} )}^{{\rm{ - }}1}}x} |_h^Q \le {\left(\sum\limits_{i = 1}^m {{{| {{{( {{y_i}} )}^{{\rm{ - }}1}}x} |}_h}} \right)^Q}.
$$
	It is the same situation as before. So for any $x\in B$, we also have
	\begin{equation}\label{EQQ012}
		\big|T(f^\infty_1,\ldots,f^\infty_l,f^0_{l+1},\ldots,f^0_m)(x)\big|\lesssim\left(\prod_{i=1}^m \big\|f_i\big\|_{L^{p_i,\kappa}(\omega_i)}\right)\cdot
		\sum_{j=1}^\infty\nu_{\vec{\omega}}\big(2^{j+1}B\big)^{{(\kappa-1)}/p}.
	\end{equation}
	Consequently, we obtain
	\begin{equation}
		\begin{aligned}
I^{\infty, \ldots \ldots, 0, \ldots, 0} & \leq \nu_{\vec{\omega}}(B)^{(1-\kappa) / p}\left|T(f_1^{\infty}, \ldots, f_{l}^{\infty}, f_{l+1}^0, \ldots, f_m^0)(x)\right| \\
& \lesssim\left(\prod_{i=1}^m\|f_i\|_{L^{p_i, \kappa}(\omega_i)}\right) \cdot \sum_{j=1}^{\infty} \frac{\nu_{\vec{\omega}}(B)^{(1-\kappa) / p}}{\nu_{\vec{\omega}}(2^{j+1} B)^{(1-\kappa) / p}} \\
& \lesssim\left(\prod_{i=1}^m\|f_i\|_{L^{p_i, \kappa}(\omega_i)}\right) \cdot \sum_{j=1}^{\infty}\left(\frac{|B|}{|2^{j+1} B|}\right)^{\delta(1-\kappa) / p} \\
& \lesssim \prod_{i=1}^m\|f_i\|_{L^{p_i, \kappa}(\omega_i)} .
\end{aligned}
	\end{equation}
	Combining with (\ref{N2}), we can finish this proof.
\end{proof}

\begin{rem}
If $T$ is an $m$-linear Calder\'on-Zygmund operator with Dini kernel $K$, we can easily to get the boundedness of $T$ as in Theorem $\ref{main_15}$ by Lemma $\ref{C-Z}$.
\end{rem}
\subsection*{Acknowledgements}
This work was supported by National Natural Science Foundation of  China (Grant No. 12271232) and Shandong Jianzhu University Foundation (Grant No. X20075Z0101).

	
\begin{flushleft}

	\vspace{0.3cm}\textsc{Xiang Li\\School of Science\\Shandong Jianzhu University\\Jinan, 250000\\P. R. China}
	
	\emph{E-mail address}: \textsf{lixiang162@mails.ucas.ac.cn}
	
	\vspace{0.3cm}\textsc{Xi Cen\\School of Mathematics and Science\\Southwest University of Science and Technology\\Mianyang, 621010\\P. R. China}
	
	\emph{E-mail address}: \textsf{xicenmath@gmail.com}
	
	\vspace{0.3cm}\textsc{Zunwei Fu\\Department of Mathmatics\\Linyi University \\Linyi, 270005\\P. R. China}
	
	\emph{E-mail address}: \textsf{fuzunwei@lyu.edu.cn}
	
	\vspace{0.3cm}\textsc{Zhongci Hang\\School of Science\\Shandong Jianzhu University \\Jinan, 250000\\P. R. China}
	
	\emph{E-mail address}: \textsf{babysbreath4fc4@163.com}

\end{flushleft}

\end{document}